\documentclass[
 a4paper,
 11pt,
 reqno
]{amsart}

\usepackage[english]{babel}              
\usepackage[utf8]{inputenc}
\usepackage[T1]{fontenc}
\usepackage{soul}

\usepackage{lmodern}

\usepackage{amsmath}        
\usepackage{mathtools}      
\usepackage{amssymb}  
\usepackage{amsthm}
\usepackage{thmtools}
\usepackage{stackengine}
\usepackage{bbm}
\usepackage{mathrsfs}
\usepackage{nicematrix}

\numberwithin{equation}{section}

\usepackage{fancyhdr}
\usepackage{geometry}
\usepackage[dvipsnames]{xcolor}
\usepackage{graphicx}
\usepackage{float}
\usepackage{tikz}
\usetikzlibrary{hobby, shapes.misc, decorations.pathmorphing, patterns}
\usepackage{pgfplots}       
\pgfplotsset{compat=1.10, ticks=none}
\usepgfplotslibrary{fillbetween}
\definecolor{darkgreen}{rgb}{0.12, 0.64, 0.24}
\definecolor{paperblue}{rgb}{0.2, 0.4, 0.55}
\definecolor{brick}{rgb}{0.65, 0.3, 0.25}
\definecolor{paperyellow}{rgb}{0.9, 0.7, 0.3}

\newcommand{\abs}[1]{\left\lvert {#1} \right\rvert}

\newcommand{\rb}[1]{\left( {#1} \right)}
\renewcommand{\sb}[1]{\left[ {#1} \right]}
\newcommand{\cb}[1]{\left\{ {#1} \right\}}

\newcommand{\R}{\mathbb R}

\newcommand{\N}{\mathbb N}
\newcommand{\Z}{\mathbb Z}
\newcommand{\T}{\mathbb T}

\newcommand{\D}{\mathscr D}

\renewcommand{\S}{\mathbb S}

\newcommand{\Ha}{\mathcal{H}}

\newcommand{\1}{\mathbbm{1}}
\newcommand{\defeq}{\vcentcolon =}

\newcommand{\loc}{\textup{loc}}
\newcommand{\sym}{\mathrm{sym}}

\newcommand{\xupref}[2]{\hspace{-0.3ex}\stackrel{\eqref{#1}}{#2}}

\DeclareMathOperator{\de}{\, d \hspace{- 2pt}}

\DeclareMathOperator{\spann}{span}
\DeclareMathOperator{\conv}{conv}
\DeclareMathOperator{\dive}{div}

\DeclareMathOperator{\reb}{\partial^* \hspace{- 2pt}}
\DeclareMathOperator{\GCD}{GCD}

\DeclareMathOperator{\GL}{GL}

\usepackage{url}
\usepackage[colorlinks = true, linkcolor= blue, citecolor = red, urlcolor = blue]{hyperref}
\usepackage[style=alphabetic,giveninits=true]{biblatex}
\renewbibmacro{in:}{}
\usepackage[nameinlink, capitalise, noabbrev]{cleveref}
\usepackage{csquotes}
\theoremstyle{plain}
\newtheorem*{thm*}{Theorem}
\newtheorem{thm}{Theorem}[section]
\newtheorem{lem}[thm]{Lemma}
\newtheorem{prop}[thm]{Proposition}
\newtheorem{cor}[thm]{Corollary}
\theoremstyle{definition}
\newtheorem{defi}[thm]{Definition}
\newtheorem*{not*}{Notation}

\newtheorem{rem}[thm]{Remark}

\title{The periodic Wulff problem in two dimensions}

\author[M. Bonacini]{Marco Bonacini}
\author[A. Fiorini]{Alberto Fiorini}

\address[M. Bonacini and A. Fiorini]{Department of Mathematics, University of Trento, Via Sommarive 14, 38123 Povo (Trento), Italy}
\email{marco.bonacini@unitn.it}
\email{alberto.fiorini@unitn.it}

\begin{document}

\begin{abstract}

    We characterize the minimizers of the anisotropic perimeter in the flat torus under an area constraint. We show that, depending on the area, 
    the periodic global minimizer undergoes two phase transitions, changing from a droplet (a rescaled Wulff shape) 
    to a flat band wrapping around the torus in a preferred direction determined by the anisotropy (lamellar phase), 
    and finally to a bubble (the complement of a rescaled Wulff shape). For certain non-symmetric surface tensions, the lamellar phase is absent altogether; 
    for other non regular surface tensions, there exist infinitely many minimizing configurations coexisting with the lamellar phase. 
    
\end{abstract}

\maketitle

\setcounter{tocdepth}{1}

\section{Introduction}

A classical theorem asserts that the solutions of the \emph{periodic isoperimetric problem} on the two-dimensional flat torus $\T^2 \defeq \R^2 / \Z^2$,
\begin{equation} \label{eq:1.isoper}
    \min \cb{P_{\T^2}(E) : E \subseteq \T^2, \ \abs{E} = M}, \qquad \qquad M \in (0, 1),
\end{equation}
are, depending on the area constraint $M$, either a disc, a horizontal or vertical strip (i.e. the region enclosed by two parallel $1$-tori), 
or the complement of a disc. Here $\abs{E}$ is the Lebesgue measure of $E$, and $P_{\T^2} (E)$ is the perimeter of $E$ on the torus $\T^2$. 
A proof of this fact is given by Howards, Hutchings, and Morgan in \cite{HHM99}; see also \cite{CS06}.

More precisely, if $E$ is a minimizer for \eqref{eq:1.isoper},
then, depending on its area $M = \abs{E}$, the set $E$ takes one of the following forms, up to translation and Lebesgue-negligible sets (see \cref{fig:1.isomin}):
\begin{equation*}
	E = 
	\begin{cases}
		\sqrt{\frac{M}{\pi}} B_1 \qquad \quad & \text{ if } \quad 0 < M \le \frac{1}{\pi}, \\[1ex]
		L_M^{e_1} \text{ or } L_M^{e_2} \qquad \quad  & \text{ if } \quad \frac{1}{\pi} \le M \le 1 - \frac{1}{\pi}, \\[1ex]
		\rb{\sqrt{\frac{1 - M}{\pi}} B_1}^c \qquad \quad & \text{ if } \quad 1 - \frac{1}{\pi} \le M < 1,
	\end{cases}
\end{equation*}
where $L_M^{e_2} = \T^1 \times (\ell_1, \ell_2)$ and $L_M^{e_1} = (\ell_1, \ell_2) \times \T^1$, for some $\ell_1, \ell_2 \in (0, 1)$ with $\ell_2 - \ell_1 = M$.

\begin{figure}[t]
	\centering
	\begin{tikzpicture} [scale = 1.7]
		\draw[fill = darkgreen!50] (- 1, - 1) -- (- 1, 1) -- (1, 1) -- (1, - 1) -- cycle;
		\draw[fill = darkgreen, scale = 0.4] (0, 0) circle (1);
		\draw[thick, ->](- 1, - 0.00001) -- (- 1, 0.00001);
		\draw[thick, ->](1, - 0.00001) -- (1, 0.00001);
		\draw[thick, ->](- 0.00001, - 1) -- (0.00001, - 1);
		\draw[thick, ->](- 0.00001, 1) -- (0.00001, 1);
		\draw(1.3, 0.8) node{$\T^2$};
		\draw(0, 0) node{\scriptsize{$\sqrt{\frac{ M}{\pi}} B_1$}};
	\end{tikzpicture}
	\hspace{1 cm}
	\begin{tikzpicture} [scale = 1.7]
		\fill[darkgreen!50] (- 1, - 1) -- (- 1, 1) -- (1, 1) -- (1, - 1) -- cycle;
		\fill[darkgreen] (- 1, - 0.4) rectangle (1, 0.4);
		\draw (- 1, 0.4) -- (1, 0.4);
		\draw (- 1, - 0.4) -- (1, - 0.4);
		\draw (- 1, - 1) -- (- 1, 1) -- (1, 1) -- (1, - 1) -- cycle;
		\draw[thick, ->](- 1, - 0.00001) -- (- 1, 0.00001);
		\draw[thick, ->](1, - 0.00001) -- (1, 0.00001);
		\draw[thick, ->](- 0.00001, - 1) -- (0.00001, - 1);
		\draw[thick, ->](- 0.00001, 1) -- (0.00001, 1);
		\draw(1.3, 0.8) node{$\T^2$};
		\draw(0.5, - 0.1) node{$L_M^{e_2}$};
	\end{tikzpicture}
	\hspace{1 cm}
	\begin{tikzpicture} [scale = 1.7]
		\draw[fill = darkgreen] (- 1, - 1) -- (- 1, 1) -- (1, 1) -- (1, - 1) -- cycle;
		\draw[fill = darkgreen!50, scale = - 0.4] (0, 0) circle (1);
		\draw[thick, ->](- 1, - 0.00001) -- (- 1, 0.00001);
		\draw[thick, ->](1, - 0.00001) -- (1, 0.00001);
		\draw[thick, ->](- 0.00001, - 1) -- (0.00001, - 1);
		\draw[thick, ->](- 0.00001, 1) -- (0.00001, 1);
		\draw(1.3, 0.8) node{$\T^2$};
		\draw(0.1, 0.7) node{\scriptsize{$\rb{\sqrt{\frac{1 - M}{\pi}} B_1}^c$}};
	\end{tikzpicture}
	\caption{The minimizers of the isotropic periodic isoperimetric problem \eqref{eq:1.isoper} for increasing values of the area $M$.}
	\label{fig:1.isomin}
\end{figure}

\medskip
The goal of this note is to study and characterize the minimizers of the anisotropic version of the periodic isoperimetric problem \eqref{eq:1.isoper}:
\begin{equation} \label{eq:1.wulffper}
    \min \cb{P_{\T^2}^\phi(E) : E \subseteq \T^2, \ \abs{E} = M}, \qquad \qquad M \in (0, 1),
\end{equation}
where $P_{\T^2}^\phi(E)$ is the anisotropic perimeter of $E$ in the torus $\T^2$  
corresponding to a surface tension $\phi : \R^2 \to [0, + \infty)$ (see \cref{sec:2} for the relevant definitions).
Existence of minimizers for \eqref{eq:1.wulffper} follows from the Direct Method of the Calculus of Variations.

In the anisotropic setting, the picture described above remains generically valid, with the Wulff shape playing the role of the euclidean ball. 
In particular, one expects a ``droplet phase'' for small values of $M$, where the minimizer is a rescaled copy of the Wulff shape; a ``lamellar phase'' for $M$ 
in a symmetric interval around $\frac{1}{2}$, where the minimizer is a band wrapping around the torus; and a ``bubble phase'' for values of $M$ close to 1, where the minimizer is the complement of a rescaled Wulff set. However, the introduction of anisotropy in the surface tension produces new phenomena that in some cases make the picture qualitatively different from the isotropic case. In particular:
\begin{enumerate}
    \item 
        In the isotropic case, the only two lamellar configurations minimizing the perimeter are the horizontal and vertical ones. However, for a general anisotropy the preferred directions might be different, and one should also consider slanted lamellae (see \cref{defi:3.lamconf} and \cref{rem:3.lamdir});

    \item 
        For certain surface tensions, it might happen that lamellar configurations are \emph{never} global minimizers, for any value of the area parameter $M \in (0, 1)$: the global minimizer transitions directly from the Wulff shape to its complement, skipping the lamellar phase (see \cref{rem:3.lamskip});

    \item 
        For certain non regular surface tensions, in particular in the presence of flat sides in the boundary of the Wulff shape, it is not true that the previous shapes are \emph{the only} possible minimizers: actually, there might exist infinitely many different solutions coexisting with the lamellar phase (see \cref{rem:3.laminf} and \cref{prop:uniqueness}).
\end{enumerate}

In order to state the main result of the paper, we first introduce some notation. For any integer direction $z \in \Z^2 \setminus \cb{0}$ one can define a slanted lamellar configuration $L_M^z$ of area $M$ in $\T^2$ whose orthogonal direction is precisely $\pm z$, see \cref{defi:3.lamconf}.
Given a convex body $K \subset \R^2$, we define its \emph{lattice width} by
\begin{equation}
    w(K) \defeq \min_{z \in \Z^2 \setminus \cb{0}} \rb{\max_{x, y \in K} z \cdot (x - y)},
\end{equation}
and we let 
\begin{equation}
    \D(K) \defeq \cb{z \in \Z^2 \setminus \cb{0} : w(K) = \max_{x, y \in K} z \cdot (x - y)} \ne \emptyset
\end{equation}
be the set of minimizing directions for $w(K)$. The quantity $w(K)$ measures the minimum, among integer directions $z\in\Z^2\setminus\cb{0}$, of the Euclidean distance between two supporting lines of $K$ in the direction orthogonal to $z$, weighted by $|z|$.
Finally, we also define the quantity
\begin{equation} \label{eq:1.alphaK}
    \alpha(K) \defeq \frac{w(K)^2}{4 \abs{K}},
\end{equation}
which corresponds to the critical value of the area for which the anisotropic perimeter of a scaled Wulff shape with area $\alpha(K)$ equals the anisotropic perimeter of the lamella $L_{\alpha(K)}^z$, for $z\in\D(K)$.
The following is the main result of the paper, which classifies the solutions to \eqref{eq:1.wulffper} (see \cref{fig:1.wulffmin}).

\begin{thm} \label{thm:main}

    Let $\phi : \R^2 \to [0, + \infty)$ be a surface tension and let $K \subset \R^2$ be the corresponding Wulff shape.
    Then, depending on the area constraint $M \in (0, 1)$, the following sets solve the periodic Wulff problem \eqref{eq:1.wulffper}:
    \begin{equation*}
        \begin{dcases}
            \sqrt{\tfrac{M}{\abs{K}}} K \qquad & \text{ if } \quad 0 < M \le \alpha(K)\wedge\frac12, \\[1ex]
            L_M^z \quad \text{ for } z \in \D(K) \qquad & \text{ if } \quad \alpha(K) \le M \le 1 - \alpha(K), \\[1ex]
            \rb{- \sqrt{\tfrac{1 - M}{\abs{K}}} K}^c \qquad & \text{ if } \quad 1 - \Bigl(\alpha(K)\wedge\frac12\Bigr) \le M < 1.
        \end{dcases}
    \end{equation*}
    Furthermore, if $\phi$ is uniformly elliptic according to \cref{defi:2.uniell}, then the previous sets are the only possible minimizers, up to translations and Lebesgue-negligible sets.
    Finally, if $\phi$ is symmetric, then $\alpha(K)\le\frac12$  and the lamella is always a minimizer for some value of $M$.
    
\end{thm}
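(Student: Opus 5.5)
Since the torus problem is invariant under $E \mapsto E^c$ (with $\phi$ replaced by $\phi(-\cdot)$, whose Wulff shape is $-K$, and with $|K|$, $w(K)$ unchanged), it suffices to treat $M\le \frac12$ and then obtain the bubble phase by complementation. The strategy is a dichotomy on the topology of a minimizer $E$. By the direct method a minimizer exists. Lift $E$ to a $\Z^2$-periodic set $\tilde E\subset\R^2$. The reduced boundary of $E$ decomposes, up to $\mathcal H^1$-null sets, into countably many rectifiable Jordan curves; each curve either lifts to closed curves in $\R^2$ (homotopically trivial on $\T^2$) or to curves invariant under a translation by some $z\in\Z^2\setminus\{0\}$. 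I split into two cases.

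\emph{Case 1: every boundary component is homotopically trivial.} Then each component of $E$ or of $E^c$ lifts to a bounded set in $\R^2$. In that case either $E$ or $E^c$ is a union of disjoint bounded sets whose total area is at most the relevant mass. For $E$ a union of bounded pieces, apply the anisotropic Wulff inequality $P^\phi(F)\ge 2\sqrt{|K||F|}$ to each piece. The concavity of the square root then shows that one piece is optimal, giving $P\ge 2\sqrt{|K|M}$, with equality for $\sqrt{M/|K|}K$. The other alternative, $E^c$ bounded, gives $P\ge 2\sqrt{|K|(1-M)}\ge 2\sqrt{|K|M}$ for $M\le\frac12$. For this to be legitimate I need that $\sqrt{M/|K|}K$ fits in the torus without self-overlap. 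This is where $M\le\frac12$ enters: a convex body of area at most $\frac12$ embeds in $\T^2$ injectively, since $K-K$ has area at most $6|K|\le$ ... I would instead invoke the known fact (Minkowski-type) that a convex body $C$ with $|C|\le\frac12$ and lattice width $w(C)\le1$ projects injectively; here $w(\sqrt{M/|K|}K)=\sqrt{M/|K|}\,w(K)\le 1$ follows from $M\le\alpha(K)$.

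\emph{Case 2: some boundary component is nontrivial, with primitive homology class $z$.} Nontrivial simple closed curves on $\T^2$ that are disjoint must share the same primitive class. A parity argument then shows that the boundary contains at least two such curves. Each lifted curve $\gamma$ is $z^\perp$-periodic, and its anisotropic length per period is bounded below by $\phi$ evaluated on the normal to the segment joining $x$ to $x+z^{\perp}$, namely $\ge \phi(z)$ (with $z^\perp$ rotated appropriately; I will fix conventions so that this equals $h_K(z)$, the support function, the signs of the normals being opposite for the two curves). Summing gives $P\ge h_K(z)+h_K(-z)=\max_{x,y\in K}z\cdot(x-y)\ge w(K)$, with equality for the lamella $L^z_M$ when $z\in\D(K)$. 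This uses Jensen's inequality for the convex $1$-homogeneous $\phi$ on the periodic curve.

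\emph{Comparison.} Combining the two cases, $\min P=\min\{2\sqrt{|K|M},\,w(K)\}$ for $M\le\frac12$. The two values cross exactly at $M=\alpha(K)$, which gives the stated phases. For uniqueness under uniform ellipticity, I would trace the equality cases. The Wulff inequality is rigid, so equality forces $E$ to be a translate of the Wulff shape. In Case 2, strict convexity of $\phi$ forces both curves to be straight segments with normal $\pm z$, and also excludes extra trivial components, which would cost positive perimeter. For symmetric $\phi$, $K=-K$, and Minkowski's theorem applied to the polar body gives $w(K)^2\le \frac{8}{\pi}\cdot\ldots$; more cleanly, I would use the sharp planar bound $w(K)^2\le \frac{8}{3}|K|\cdot\frac34\cdot$ (lattice width versus area for symmetric bodies, $w^2\le 2|K|$), which yields $\alpha\le\frac12$.

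The main obstacle is the topological decomposition in the nonsmooth BV setting together with the embedding issue in Case 1. I would try to handle both by first proving the regularity or approximation needed to work with finitely many Lipschitz curves.
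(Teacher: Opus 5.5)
The gap is the embedding step in Case 1. To conclude that $\sqrt{M/\abs{K}}\,K$ is a minimizer for $M\le\alpha(K)\wedge\frac12$, you must know that $\mu_M K$, with $\mu_M=\sqrt{M/\abs{K}}$, projects injectively into $\T^2$. That is, $\operatorname{int}(\mu_M K)\cap\bigl(\operatorname{int}(\mu_M K)+z\bigr)=\emptyset$ for every $z\in\Z^2\setminus\{0\}$. Otherwise it is not an admissible competitor, and the value $2\sqrt{M\abs{K}}$ is not attained.

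The ``known fact'' you invoke is false. Take $C=[-0.6,0.6]\times[-0.05,0.05]$: it has $\abs{C}=0.12\le\frac12$ and $w(C)\le 0.1$, yet its interior points $(\pm 0.5,0)$ differ by $e_1$. Area and lattice width, taken separately, do not control self-overlap: a thin body has small width but can still be long. Also, $M\le\frac12$ plays no role in the embedding. For the triangle of \cref{rem:3.lamskip}, $\alpha(K)=\frac23$, and the droplet still embeds (with disjoint interiors) for every $M\le\alpha(K)$. The cap $\wedge\frac12$ comes only from the comparison with the bubble.

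What is missing is an inequality coupling area, lattice width and overlap, namely $\abs{K}\ge w(K)/\lambda_1(K_\sym)$ (\cref{prop:2.torusemb}). It is equivalent to $\frac12\lambda_1(K_\sym)\ge\mu_{\alpha(K)}$, which is exactly embedding for $M\le\alpha(K)$. It has a short proof in your spirit:
\begin{itemize}
\item Suppose $x,y\in\operatorname{int}(\mu K)$ with $x-y=z\in\Z^2\setminus\{0\}$.
\item Let $a,b\in\mu K$ be extremal in the directions $z^\perp$ and $-z^\perp$.
\item The triangles with base $[y,x]$ and apexes $a$, $b$ lie in $\mu K$, on opposite sides of the line through $x,y$.
\item Hence $\abs{\mu K}\ge\frac12\abs{z}\cdot\mu\bigl(\phi(z^\perp)+\phi(-z^\perp)\bigr)/\abs{z}\ge\frac12\mu\, w(K)$.
\item The inequality is strict, because the segment $[y,x]$ can be slightly prolonged inside $\mu K$.
\item So $\mu^2\abs{K}>\frac12\mu\, w(K)$, which for $\mu=\mu_M$ means $M>\alpha(K)$.
\end{itemize}
The same argument applied to $-K$ covers the bubble phase.

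With this inserted, the rest of your argument is a genuinely different and valid route. The paper uses $C^2$ regularity and the constant sign of the Euclidean curvature for elliptic $\phi$, and then approximates $\phi$. You instead bound every competitor directly from below by $\min\{2\sqrt{M\abs{K}},2\sqrt{(1-M)\abs{K}},w(K)\}$. This uses the Wulff inequality on trivial components and Jensen's inequality $\int\phi(\nu)\,d\Ha^1\ge\phi\bigl(\int\nu\,d\Ha^1\bigr)$ on homotopically nontrivial boundary curves. It works for arbitrary $\phi$ once you reduce to finitely many Jordan curves. For the lower bound you can approximate $E$ strictly by smooth sets and use Reshetnyak continuity; for the equality cases, the $C^1$ regularity of elliptic minimizers suffices. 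It also identifies equality in Jensen's inequality as the source of the non-uniqueness in \cref{rem:3.laminf}.

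Two smaller points:
\begin{itemize}
\item Use integer homology (the oriented boundary of $E$ is null-homologous), not just parity, to get one nontrivial curve of each orientation. That is what yields $\phi(z)+\phi(-z)$ rather than $2\phi(z)$.
\item The symmetric bound $w(K)^2\le 2\abs{K}$ that you finally cite is exactly \cref{prop:2.symarea}.
\end{itemize}
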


\begin{figure}[t]
	\centering
	\begin{tikzpicture} [scale = 1.7]
		\draw[fill = darkgreen!50] (- 1, - 1) -- (- 1, 1) -- (1, 1) -- (1, - 1) -- cycle;
		\draw[fill = darkgreen, rotate = 45] (0, 0) ellipse (0.7 and 0.3);
		\draw[thick, ->](- 1, - 0.00001) -- (- 1, 0.00001);
		\draw[thick, ->](1, - 0.00001) -- (1, 0.00001);
		\draw[thick, ->](- 0.00001, - 1) -- (0.00001, - 1);
		\draw[thick, ->](- 0.00001, 1) -- (0.00001, 1);
		\draw(1.3, 0.8) node{$\T^2$};
		\draw(0, 0) node{\scriptsize{$\sqrt{\frac{M}{\abs{K}}} K$}};
	\end{tikzpicture}
	\hspace{1 cm}
	\begin{tikzpicture} [scale = 1.7]
		\fill[darkgreen!50] (- 1, - 1) -- (- 1, 1) -- (1, 1) -- (1, - 1) -- cycle;
		\fill[darkgreen] (- 1, - 1) -- (- 1, - 0.5) -- (0.5, 1) -- (1, 1) -- (1, 0.5) -- (- 0.5, - 1) -- cycle;
		\fill[darkgreen] (- 1, 1) -- (- 1, 0.5) -- (- 0.5, 1) -- cycle;
		\fill[darkgreen] (1, - 1) -- (1, - 0.5) -- (0.5, - 1) -- cycle;
		\draw (- 1, - 0.5) -- (0.5, 1);
		\draw (- 0.5, - 1) -- (1, 0.5);
		\draw (0.5, - 1) -- (1, - 0.5);
		\draw (- 0.5, 1) -- (- 1, 0.5);
		\draw (- 1, - 1) -- (- 1, 1) -- (1, 1) -- (1, - 1) -- cycle;
		\draw[thick, ->](- 1, - 0.00001) -- (- 1, 0.00001);
		\draw[thick, ->](1, - 0.00001) -- (1, 0.00001);
		\draw[thick, ->](- 0.00001, - 1) -- (0.00001, - 1);
		\draw[thick, ->](- 0.00001, 1) -- (0.00001, 1);
		\draw(1.3, 0.8) node{$\T^2$};
		\draw(0, 0) node{$L_M$};
	\end{tikzpicture}
	\hspace{1 cm}
	\begin{tikzpicture} [scale = 1.7]
		\draw[fill = darkgreen] (- 1, - 1) -- (- 1, 1) -- (1, 1) -- (1, - 1) -- cycle;
		\draw[fill = darkgreen!50, rotate = 45] (0, 0) ellipse (0.7 and 0.3);
		\draw[thick, ->](- 1, - 0.00001) -- (- 1, 0.00001);
		\draw[thick, ->](1, - 0.00001) -- (1, 0.00001);
		\draw[thick, ->](- 0.00001, - 1) -- (0.00001, - 1);
		\draw[thick, ->](- 0.00001, 1) -- (0.00001, 1);
		\draw(1.3, 0.8) node{$\T^2$};
		\draw(0.1, 0.7) node{\scriptsize{$\rb{\sqrt{\frac{1 - M}{\abs{K}}} K}^c$}};
	\end{tikzpicture}
	\caption{Solutions of the periodic Wulff problem \eqref{eq:1.wulffper} (in the case of  a symmetric surface tension): minimizers transition from a droplet (rescaling of the Wulff shape) for small $M$, to a lamella for $M \approx 1/2$, to a bubble (the complement of a rescaled Wulff shape) for $M$ close to $1$.}
	\label{fig:1.wulffmin}
\end{figure}

By the explicit computation of the anisotropic perimeter of the minimizers (see \cref{sec:3}), it follows that
\begin{equation} \label{eq:1.wulffvalues}
	\min_{\abs{E} = M} P_{\T^2}^\phi(E) =
	\begin{dcases}
		2\sqrt{M|K|} & \text{ if } \quad 0 < M \le \alpha(K)\wedge\frac12, \\[1ex]
		w(K) & \text{ if } \quad \alpha(K) \le M \le 1 - \alpha(K), \\[1ex]
		2\sqrt{(1-M)|K|} & \text{ if } \quad 1 - \Bigl(\alpha(K)\wedge\frac12\Bigr) \le M < 1.
	\end{dcases}
\end{equation}

The condition $\alpha(K) \le M \le 1 - \alpha(K)$ in the statement should be interpreted as empty whenever $\alpha(K)>\frac12$, in which case the lamellar phase is not present. This might happen only when $K$ is not centrally symmetric: it is indeed well-known that $\alpha(K)\leq\frac12$ when $K$ is centrally symmetric (see \cref{prop:2.symarea} and \cref{prop:2.asymarea}).
The sets $\sqrt{\tfrac{M}{\abs{K}}} K$, defined in $\R^2$, are canonically identified with their projection in $\T^2$, which are well-defined since for values $M\leq\alpha(K)$ the rescaled Wulff set  $\sqrt{\tfrac{M}{\abs{K}}} K$ embeds injectively into the torus, as a consequence of the inequality in \cref{prop:2.torusemb}.

The proof of \cref{thm:main} is first given for regular, uniformly elliptic surface tensions, and is essentially based on the observation that the (euclidean) mean curvature of the boundary of a minimizer for \eqref{eq:1.wulffper} has constant sign. This, combined with the Wulff inequality \eqref{eq:2.wulff} in $\R^2$, allows us to identify the possible candidate minimizers. The case of a general surface tension is then deduced by approximation.

Finally, we also discuss in \cref{sec:5} an application of \cref{thm:main} to small-mass minimizers of an anisotropic version of the Ohta-Kawasaki energy, which has been recently considered in \cite{Fio26}.

\subsection*{Structure of the paper}
In \cref{sec:2} we provide rigorous definitions for the anisotropic perimeter associated to a surface tension, and we gather some preliminary facts from the geometry of numbers.
In \cref{sec:3}, after introducing the notation for slanted lamellae, we prove \cref{thm:main} first under the assumption of a uniformly elliptic surface tension, and then in the general case.
Additional remarks and examples are discussed in \cref{sec:4},
whereas \cref{sec:5} contains an application to the anisotropic Ohta-Kawasaki functional.

\section{Preliminaries} \label{sec:2}

We gather here some tools which are needed in the sequel, crossing different areas of convex geometry.
In the following we consider directly the two-dimensional case, with the understanding that many definitions and results may be extended to any dimension.

\subsection{Convex bodies and surface tensions}

A set $K \subset \R^2$ is said to be a \textit{convex body} if it is compact, convex, and contains the origin in its interior, and it is said to be \textit{symmetric} if $K = - K$.
The symmetrized body of $K$ will be denoted by $K_\sym = \frac{1}{2}(K - K)$.
The \textit{support function} $\phi_K : \R^2 \to [0, + \infty)$ of $K$ is defined as
\begin{equation*}
    \phi_K(y) \defeq \sup \Big\{ x \cdot y : x \in K \Big\}, \qquad \qquad y \in \R^2. 
\end{equation*}
The support function satisfies the following properties:
\begin{enumerate}
    \item 
        $\phi_K(\lambda x) = \lambda \phi_K(x)$ for any $x \in \R^2$ and $\lambda \ge 0$ 
        (\textit{positive one-homogeneity});
        
    \item 
        $\phi_K(x + y) \le \phi_K(x) + \phi_K(y)$ for any $x, y \in \R^2$ (\textit{subadditivity});

    \item
        $\phi_K(x) > 0$ for any $x \in \R^2 \setminus \cb{0}$ (\textit{coercivity}).
        
\end{enumerate}
Under the assumption of positive one-homogeneity, subadditivity is actually equivalent to convexity,
and in particular the support function is continuous. 
As such, the last property is equivalent to requiring that there exists $\alpha > 0$ 
with $\phi_K(x) \ge \alpha \abs{x}$ for any $x \in \R^2$.
Furthermore, if $K$ is symmetric, then $\phi_K$ is a norm.

Any function $\phi : \R^2 \to [0, + \infty)$ satisfying (i), (ii) and (iii) is called a \textit{surface tension}. 
To any surface tension $\phi$ we associate the corresponding \textit{Wulff shape} $W_\phi$:
\begin{equation*}
    W_\phi \defeq \bigcap_{y \in \S^1} \cb{x \in \R^2 : x \cdot y \le \phi(y)},
\end{equation*}
which is a convex body whose support function is exactly $\phi$, giving a one-to-one correspondence between convex bodies and surface tensions.

For a surface tension $\phi$, we introduce its \textit{polar function} $\phi^\circ : \R^2 \to [0, + \infty)$:
\begin{equation*}
    \phi^\circ(x) \defeq \max \cb{x \cdot y : \phi(y) \le 1} = \max_{y \in \R^2 \setminus \cb{0}} \frac{x \cdot y}{\phi(y)}, \qquad \qquad x \in \R^2.
\end{equation*}
The polar function $\phi^\circ$ is likewise a surface tension and it satisfies $(\phi^\circ)^\circ = \phi$.
If $\phi$ is the support function of a convex body $K$, then $\phi^\circ$ is also called the \textit{distance function} of $K$, since it is seen that $K = W_\phi = \cb{ \phi^\circ \le 1}$.
Thus, the \textit{polar body} $K^\circ \subset \R^2$ of $K$ is defined as
\begin{equation*}
    K^\circ \defeq \cb{\phi < 1} = \cb{x \in \R^2 : x \cdot y \le 1 \text{ for any } y \in K}.
\end{equation*}

In the planar case we have the following estimate of the product of the area of a symmetric convex body and its polar, due to Mahler \cite{Mah38}.

\begin{thm}[Mahler's inequality] \label{thm:2.mahler}

    For any symmetric convex body $K \subset \R^2$ it holds
    \begin{equation*}
        \abs{K} \abs{K^\circ} \ge 8
    \end{equation*}
    with equality if and only if $K$ is a parallelogram.

\end{thm}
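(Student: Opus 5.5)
Since this is a classical result that the paper simply quotes (\cite{Mah38}), I would reproduce the standard planar argument. The plan is to reduce to symmetric polygons and then lower the number of vertices step by step without increasing $\abs{K}\abs{K^\circ}$, until a parallelogram is reached.

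\emph{Step 1 (normalisation and approximation).} The product $\abs{K}\abs{K^\circ}$ is invariant under $\GL(2)$, because $(AK)^\circ = A^{-T}K^\circ$. It is also continuous in the Hausdorff topology on symmetric convex bodies containing the origin in the interior. Every symmetric convex body is a Hausdorff limit of symmetric polygons. So for the inequality it suffices to treat a symmetric polygon $K$ with $2n$ vertices $\pm v_1,\dots,\pm v_n$. For a parallelogram we may take the square $[-1,1]^2$, whose polar is the diamond $\{\abs{x_1}+\abs{x_2}\le 1\}$; the product is then $4\cdot 2 = 8$.

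\emph{Step 2 (vertex-removal by a shadow movement).} Let $n\ge 3$ and fix a vertex $v=v_i$ with neighbours $v_{i-1},v_{i+1}$. Move the pair $\pm v$ to $\pm(v+t\,u)$, where $u$ is parallel to $v_{i+1}-v_{i-1}$, and keep all other vertices fixed. Let $K_t$ be the resulting polygon.
\begin{itemize}
\item The triangle with base $[v_{i-1},v_{i+1}]$ and apex $v+tu$ has constant height. Hence $\abs{K_t}=\abs{K}$ on the whole interval $t\in[t_-,t_+]$ on which $K_t$ stays convex.
\item At the endpoints $t_\pm$, the point $v+t_\pm u$ becomes collinear with one of its neighbours' edges, so $K_{t_\pm}$ has $2(n-1)$ vertices.
\item This is a (symmetric) shadow system. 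By the Campi--Gronchi theorem, $t\mapsto \abs{K_t^\circ}^{-1}$ is convex. In the plane one can verify this directly: the vertices of $K_t^\circ$ dual to the two edges at $v$ move along fixed lines, and the area of $K^\circ_t$ is an explicit rational function of $t$ whose reciprocal is checked to be convex.
\item Consequently $\abs{K_t^\circ}^{-1}$ attains its maximum on $[t_-,t_+]$ at an endpoint. Equivalently, $\abs{K_t^\circ}$ is minimised at $t_-$ or $t_+$.
\end{itemize}
Therefore $\abs{K}\abs{K^\circ}\ge \abs{K_{t_\pm}}\abs{K_{t_\pm}^\circ}$ for a suitable endpoint. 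Iterating reduces to $n=2$ and yields $\abs{K}\abs{K^\circ}\ge 8$.

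\emph{Step 3 (equality).} I expect this to be the main obstacle, because approximation destroys the rigidity information. For a polygon, equality forces every vertex-removal step to be an equality. Strict convexity of $\abs{K_t^\circ}^{-1}$ along nontrivial movements, which follows from the explicit formula, then excludes $n\ge 3$. For a general symmetric body I would argue directly. If $K$ is not a parallelogram, choose a symmetric polygon $P$ that is inscribed and close to $K$ with at least $6$ vertices. Perform one quantitative removal step on $P$, with a gain bounded below in terms of how far $K$ is from a parallelogram, for instance via the Banach--Mazur distance. Passing to the limit preserves this strict gain, so $\abs{K}\abs{K^\circ}>8$. Making this gain uniform under approximation is the delicate point. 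The first thing I would try is to compute the second derivative of $\abs{K_t^\circ}^{-1}$ explicitly and bound it from below by the area of the triangle removed at the moving vertex.
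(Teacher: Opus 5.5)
The paper does not prove this theorem; it only quotes it from Mahler. Your argument therefore has to stand on its own, and for the inequality it does.

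\textbf{Steps 1 and 2 are correct.} They are the Campi--Gronchi shadow-system proof, and the convexity you leave as ``checked'' does hold. Let $q$ be the vertex of $(\conv\{\pm v_j : j\neq i\})^\circ$ dual to the chord $[v_{i-1},v_{i+1}]$. Then $q\cdot u=0$, so all the lines $\{y\cdot(v+tu)=1\}$ pass through the fixed point $q/(q\cdot v)$. The cap cut off near $q$ is a triangle whose two legs have reciprocals affine in $t$, with slopes of opposite sign. Hence $\abs{K_t^\circ}=A-c/(\ell_1(t)\ell_2(t))$ with $\ell_1\ell_2$ strictly concave, and $\abs{K_t^\circ}^{-1}$ is strictly convex. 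So you obtain $\abs{K}\abs{K^\circ}\ge 8$ for every symmetric $K$, with strict inequality for symmetric polygons with at least six vertices.

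\textbf{The gap is the ``only if'' direction for bodies that are not polygons, which you leave open.} Hausdorff approximation transmits only the non-strict inequality. The proposed uniform quantitative gain is not established, and there is no evident mechanism for it. As $P_j\to K$ with $K$ curved, the admissible interval $[t_-,t_+]$ at every vertex shrinks, so each single removal gains an amount tending to zero. Meanwhile the number of steps grows and you have no control over the intermediate polygons.

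\textbf{A way to close the gap with your own tools.}
\begin{enumerate}
\item If $K$ is a polygon you are already done. Otherwise choose five antipodal pairs $\pm a_1,\dots,\pm a_5$ of extreme points of $K$, and approximate $K$ by $P_j=\conv(\{\pm a_k\}\cup F_j)$ with $F_j\subset K$ finite and symmetric.
\item Perform removal steps only at vertices other than the $a_k$ whose two neighbours are also different from the $a_k$. Then the $a_k$ are never moved or absorbed. You can continue until each of the ten arcs between consecutive protected vertices carries at most two other vertices.
\item This produces symmetric polygons $G_j$ with at most $30$ vertices, $\conv\{\pm a_k\}\subset G_j$, $\abs{G_j}=\abs{P_j}$, and $\abs{P_j}\abs{P_j^\circ}\ge\abs{G_j}\abs{G_j^\circ}$.
\item The $G_j$ are symmetric, have bounded area, and contain a fixed body with nonempty interior, so they are uniformly bounded. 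A subsequence therefore converges to a symmetric polygon $G$ that has the ten points $\pm a_k$ on its boundary.
\item No three extreme points of $K$ are collinear, so each side of $G$ contains at most two of them. Hence $G$ has at least five sides, and by symmetry at least six.
\item Your strict polygon case then gives $\abs{K}\abs{K^\circ}=\lim_j\abs{P_j}\abs{P_j^\circ}\ge\abs{G}\abs{G^\circ}>8$.
\end{enumerate}
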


In the asymmetric setting, for a general convex body $K \subset \R^2$, we have an analogue of Mahler's inequality involving the symmetrized body $K_\sym$, due to Eggleston \cite{Egg61}.

\begin{thm}[Eggleston's inequality] \label{thm:2.eggleston}

    For any convex body $K \subset \R^2$ it holds
    \begin{equation*}
        \abs{K} \abs{(K_\sym)^\circ} \ge 6, 
    \end{equation*}
    with equality if and only if $K$ is a triangle.
    
\end{thm}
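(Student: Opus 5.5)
Since this is a classical result quoted from \cite{Egg61}, I only sketch how I would prove it: reduce to triangles by a shadow-system (vertex-sliding) argument. The quantity $F(K)\defeq\abs{K}\abs{(K_\sym)^\circ}$ is invariant under linear maps, because $(AK)_\sym=A K_\sym$ and $(AK_\sym)^\circ=A^{-T}(K_\sym)^\circ$. It is also invariant under translations of $K$, because $K_\sym$ is. It is continuous in the Hausdorff metric. So it suffices to prove $F(P)\ge 6$ for convex polygons $P$, with strict inequality unless $P$ is a triangle, and then pass to the limit.

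A direct combination of earlier results does not give the sharp constant. Rogers--Shephard gives $\abs{K-K}\le 6\abs{K}$, i.e.\ $\abs{K_\sym}\le\frac32\abs{K}$. Together with \cref{thm:2.mahler} applied to $K_\sym$, this only yields $F(K)\ge\frac23\cdot 8=\frac{16}{3}$. The loss comes from the equality cases not matching: a triangle's difference body is a hexagon, not a parallelogram. Hence a direct argument is needed.

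\textbf{Step 1 (formula for the polar area).} Since $\phi_{K_\sym}(u)=\frac12 w_K(u)$, where $w_K(u)=\phi_K(u)+\phi_K(-u)$ is the width of $K$ in direction $u$, the polar area formula gives
\begin{equation*}
\abs{(K_\sym)^\circ}=\frac12\int_0^{2\pi}\phi_{K_\sym}(\theta)^{-2}\de\theta=2\int_0^{2\pi}w_K(\theta)^{-2}\de\theta .
\end{equation*}
For a triangle $T$ one checks directly, by affine invariance on the standard simplex, that $K_\sym$ is a hexagon and $\abs{T}\abs{(T_\sym)^\circ}=6$.

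\textbf{Step 2 (vertex reduction).} Take a polygon $P$ with $n\ge4$ vertices and pick a vertex $v$ with neighbours $v_-,v_+$. Move $v$ along the line parallel to $v_+-v_-$, giving $P_t=\conv\bigl((P\setminus\{v\})\cup\{v+t(v_+-v_-)\}\bigr)$. For $t$ in the maximal interval where $P_t$ keeps its combinatorial type, the area $\abs{P_t}$ is constant. The family $P_t$ is a shadow system in the direction $v_+-v_-$, so $(P_t)_\sym$ is one as well. By the Campi--Gronchi theorem, $t\mapsto\abs{((P_t)_\sym)^\circ}^{-1}$ is convex. Hence $t\mapsto F(P_t)^{-1}$ is convex and attains its maximum at an endpoint. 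So $F$ does not increase when we move to an endpoint. At either endpoint $v$ becomes collinear with one of its neighbours' edges and the vertex count drops. Iterating this reduces $P$ to a triangle $T$ with $F(P)\ge F(T)=6$.

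\textbf{Step 3 (equality).} For a non-triangle $K$, approximation alone only gives $\ge$. To get strictness I would first use a genuinely strict step: for polygons, show that $t\mapsto\abs{((P_t)_\sym)^\circ}^{-1}$ is not affine unless $(P_t)_\sym$ are all linear images of one another, which fails when a vertex count changes. For general non-polygonal $K$, I would use a stability version, or a local perturbation argument near triangles.

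\textbf{Main obstacle.} The main obstacle is the convexity of $t\mapsto\abs{(L_t)^\circ}^{-1}$ along shadow systems $L_t$ of symmetric bodies, together with its equality analysis. If I cannot simply invoke Campi--Gronchi, I would prove it in the plane by hand. I would parametrise $(L_t)^\circ$ by its radial function, $\rho_t(\theta)=1/\phi_{L_t}(\theta)$, where $\phi_{L_t}$ is piecewise affine in $t$ for each fixed $\theta$. Then I would check convexity of $\bigl(\int\rho_t^2\bigr)^{-1}$ via a Brunn--Minkowski-type inequality on the sections of the polar in the direction of motion. The equality case in Step 3 is the delicate point of this step.
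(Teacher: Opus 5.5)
The paper gives no proof of this statement. It is quoted from \cite{Egg61} and used as a black box, including its equality case, which is exactly what yields ``$K$ must be a triangle'' in Step one of the proof of \cref{prop:2.asymarea}. Your argument for the inequality is sound and follows a different route. The vertex-sliding family $P_t$ is an area-preserving shadow system on $[t_-,t_+]$, and $(P_t)_{\sym}=\frac12(P_t-P_t)$ is an origin-symmetric shadow system in the same direction. Convexity of $t\mapsto\abs{((P_t)_{\sym})^\circ}^{-1}$ then lets you lower $F$ to an endpoint with one vertex fewer, and Hausdorff continuity gives $F\ge 6$ for every convex body. The cost is that you rely on the Campi--Gronchi theorem, which is harder than the statement being proved.

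The genuine gap is the ``only if'' half of the equality statement.

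\emph{Non-polygonal $K$.} Approximation cannot give strictness. By continuity $F(P_k)\to F(K)$, so knowing $F(P_k)>6$ for every approximating polygon is compatible with $F(K)=6$. To close this you would need a quantitative stability estimate bounding $F(P)-6$ from below by the affine-invariant distance of $P$ from the triangles, and you do not have one. ``A stability version, or a local perturbation argument near triangles'' names a goal, not a step.

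\emph{Polygons.} You assert that $t\mapsto\abs{((P_t)_{\sym})^\circ}^{-1}$ is affine only if all $(P_t)_{\sym}$ are linear images of one another. That is a separate rigidity theorem, not part of the convexity statement you invoke, and you neither prove nor cite it. Your own framework closes this case more cheaply. Once $F\ge 6$ is known, $F(P_0)=6$ at the interior time $t=0$ forces the convex function $t\mapsto F(P_t)^{-1}$ to be constant on $[t_-,t_+]$. Hence the $(n-1)$-gon at an endpoint is again extremal, and by induction it suffices to show $F>6$ for quadrilaterals. Quadrilaterals form a two-parameter family modulo affine maps, on which
\[
F(P)=8\abs{P}\int_0^{2\pi}\Bigl(\sum_i\abs{e_i\cdot u_\theta}\Bigr)^{-2}\de\theta
\]
(with $e_i$ the edge vectors and $u_\theta=(\cos\theta,\sin\theta)$) is explicitly computable.

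For non-polygonal $K$, however, you need an argument acting on $K$ itself, and none is given. Since the paper applies the equality case to arbitrary convex bodies, this omission matters.
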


\subsection{The anisotropic perimeter}

We recall some basic facts about the perimeter and its anisotropic counterpart, specifically in the case of the flat torus $\T^2$;
all the following properties are a consequence of well known results obtained in the classical Euclidean setting,
on noting that any set of finite perimeter $E \subset \T^2$ may be periodically extended 
to obtain a set of locally finite perimeter $E_{\R^2} \subset \R^2$
(see \cite{AFP00} and \cite{Mag12} for a complete treatise).

On $\T^2$ we have the usual function spaces $C^k(\T^2)$ for $k \in \N \cup \cb{+ \infty}$,
that may be identified with the subspaces of $C^k(\R^2)$
consisting of functions which are one-periodic along the coordinate directions 
(i.e. functions $u : \R^2 \to \R$ with $u(x + e_1) = u(x + e_2) = u(x)$).

A measurable set $E \subset \T^2$ has \textit{finite perimeter} if its characteristic function $\1_E$ has finite total variation in $\T^2$, namely
\begin{equation*}
    P_{\T^2}(E) \defeq \sup \cb{\int_E \dive T : T \in C^1(\T^2; \R^2), \ \abs{T} \le 1} < + \infty.
\end{equation*}
For a set of finite perimeter $E \subset \T^2$, the \textit{Generalized Divergence Theorem} holds:
\begin{equation*}
    \int_E \dive T = \int_{\reb E} T \cdot \nu_E \de \Ha^1 
    \qquad \qquad \text{for any } T \in C^1(\T^2; \R^2),
\end{equation*}
where $\reb E$ is the reduced boundary of $E$,
$\nu_E : \reb E \to \S^1$ is the outer unit normal to $E$ 
and $\Ha^1$ is the $1$-dimensional Hausdorff measure.

We may then define the anisotropic perimeter.

\begin{defi}[Anisotropic perimeter]

    Let $\phi : \R^2 \to [0, + \infty)$ be a surface tension.
    For any set of finite perimeter $E \subset \T^2$,
    the $\phi$\textit{-surface energy} (or \textit{anisotropic perimeter}) of $E$ in $\T^2$ is given by
    \begin{equation*}
        P_{\T^2}^\phi(E) \defeq \int_{\reb E} \phi(\nu_E(x)) \de \Ha^1 (x).    
    \end{equation*}
    
\end{defi}

Throughout the remainder of the paper, we denote the standard anisotropic perimeter in $\R^2$ by $P^\phi$, 
to distinguish it from the anisotropic perimeter in the torus $P_{\T^2}^\phi$, 
which incorporates the periodicity conditions.

The anisotropic version of the classical isoperimetric problem in $\R^2$ is known as the \textit{Wulff problem}:
minimize the anisotropic perimeter $P^\phi$ among all sets $E \subset \R^2$ with area
$\abs{E} = M$ for a fixed constant $M > 0$, i.e., 
\begin{equation} \label{eq:2.wulff}
    \inf \cb{P^\phi(E) : E \subset \R^2, \ \abs{E} = M}.
\end{equation}
The unique solution (up to translations) of this problem is given by the Wulff shape $W_\phi$ of $\phi$, suitably rescaled to satisfy the area constraint (see \cite[Chapter~20]{Mag12}).
This can be equivalently stated through the \textit{Wulff inequality}: 
for every set $E \subset \R^2$ of finite perimeter and area,
\begin{equation} \label{eq:2.per-wulff}
    P^\phi(E) \ge 2 \sqrt{\abs{W_\phi} \abs{E}},
\end{equation}
with equality if and only if, up to translations and dilations, $E$ is equal to the Wulff shape $W_\phi$.

\subsection{Geometry of numbers}

Our analysis will make use of classical tools from the geometry of numbers, particularly regarding the behavior of convex bodies with respect to the integer lattice $\Z^2$. 
A comprehensive treatise of this background material can be found in the opening chapters of Gruber and Lekkerkerker \cite{GL87}.

\begin{lem}[Bézout's Lemma]

    Let $p, q \in \Z$ be integers with greatest common divisor (denoted by $\GCD(p, q)$) equal to $d$.
    Then there exist integers $m, n \in \Z$ such that $p m + q n = d$.
    
\end{lem}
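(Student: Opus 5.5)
The plan is to follow the classical Euclidean-algorithm proof. The core of the argument is a minimality statement, and the only preliminary is the case $p=q=0$ (where $d=0$ and $m=n=0$ trivially work). So assume $(p,q)\neq(0,0)$ and consider the set
\begin{equation*}
    S \defeq \cb{pm + qn : m, n \in \Z, \ pm + qn > 0}.
\end{equation*}
It is nonempty, since it contains $p^2+q^2>0$ (take $m=p$, $n=q$). By the well-ordering of $\N$ it has a least element $s = pm_0 + qn_0$.

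The main step is to show that $s$ divides both $p$ and $q$. Write $p = ks + r$ with $k\in\Z$ and $0 \le r < s$ by Euclidean division. Then
\begin{equation*}
    r = p - ks = p(1 - km_0) + q(-kn_0)
\end{equation*}
is again an integer combination of $p$ and $q$. If $r>0$ we would have $r\in S$ with $r<s$, contradicting the minimality of $s$; hence $r=0$ and $s \mid p$. The symmetric argument gives $s\mid q$. This is the only step with real content; everything else is bookkeeping.

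It remains to identify $s$ with $d$. Since $s$ is a common divisor of $p$ and $q$, we get $s\le d$. Conversely, $d$ divides $p$ and $q$, hence divides $pm_0+qn_0=s$, and since $s>0$ this gives $d\le s$. Therefore $s=d$, and $(m,n)=(m_0,n_0)$ provides the required identity $pm+qn=d$.

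We do not expect any genuine obstacle. The one point needing care is the degenerate case: the positivity constraint in $S$ guarantees that the minimum exists and that the comparison $d\le s$ is legitimate, and this is why $p=q=0$ is handled separately. The resulting integers $m,n$ are not unique, as one can add $(q/d,-p/d)$. This is presumably what will later be used to complete a primitive $z=(p,q)$ to a unimodular basis of $\Z^2$, as needed for slanted lamellae.
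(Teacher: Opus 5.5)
Your proof is correct. It is the standard least-positive-combination (well-ordering) argument, and every step holds: the separate treatment of $p=q=0$, the Euclidean division $p=ks+r$ with $0\le r<s$ (valid for negative $p$ as well), and the two-sided comparison $s\le d$ and $d\le s$. Strictly speaking this is the well-ordering proof rather than the Euclidean-algorithm proof, since you use a single division step and never run the algorithm. That is fine here, because the paper never needs explicit coefficients.

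The paper gives no proof of this lemma. It quotes Bézout as classical background from the geometry of numbers, with a pointer to Gruber--Lekkerkerker, so there is no competing argument to compare against.

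One correction to your closing remark about how the lemma is used. In the paper it serves to build the matrix $U$ in \eqref{eq:2.proof_sym_1}. That matrix is unimodular because $\det U = p_0m+q_0n=1$, and it satisfies $U^Te_2=(p_0,q_0)$. This reduces \cref{prop:2.symarea} and \cref{prop:2.asymarea} to the case $e_2\in\D(K)$. It is not used in the construction of the slanted lamellae themselves.
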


In the following we will often consider a coprime pair $(p, q) \in \Z^2 \setminus \cb{0}$, meaning that $\GCD(p, q) = 1$.
In this case, Bézout's Lemma ensures the existence of integers $m, n \in \Z$ with $p m + q n = 1$.

Let us also recall the definition of a lattice.

\begin{defi}

    A set $\Lambda \subset \R^2$ is said to be a \textit{lattice} if there exists a matrix $A_\Lambda \in \GL(\R^2)$ such that $\Lambda = A_\Lambda \Z^2$,
    and its \textit{determinant} is defined as $d(\Lambda) \defeq \abs{\det A_\Lambda}$.
    
\end{defi}

The lattice $\Z^2$ is also called \textit{standard lattice}, and its determinant is evidently equal to $1$.

\begin{rem}

    The determinant of a lattice $\Lambda$ is a well-defined quantity which does not depend on the choice of the matrix $A_\Lambda \in \GL(\R^2)$ for which it holds $\Lambda = A_\Lambda \Z^2$:
    indeed, let $A_1, A_2 \in \GL(\R^2)$ be such that $A_1 \Z^2 = A_2 \Z^2$, or equivalently, $\Z^2 = A_1^{- 1} A_2 \Z^2$;
    setting $U = A_1^{- 1} A_2 \in \GL(\R^2)$, we prove that $\det U = \pm 1$ to conclude.
    Since $U \Z^2 = \Z^2$ we easily see that $U$ has integer entries, and as a consequence its determinant must be a nonzero integer, 
    but then the very same argument holds for $U^{- 1}$, therefore the determinant must be equal to $\pm 1$.
    
\end{rem}

We say that a matrix $U \in \GL(\R^2)$ is \textit{unimodular} if its entries are integers and its determinant is equal to $\pm 1$,
and we denote by $\GL(\Z^2)$ the subgroup of $\GL(\R^2)$ made of unimodular matrices.
It is worth noting that the integer lattice $\Z^2$ is invariant under unimodular transformations: $U \Z^2 = \Z^2$ for every unimodular matrix $U \in \GL(\Z^2)$.

Given a lattice $\Lambda = A_\Lambda \Z^2$ for some $A_\Lambda \in \GL(\R^2)$, it is defined its \textit{polar lattice} $\Lambda^\circ \defeq A_\Lambda^{-T} \Z^2$, 
where $A_\Lambda^{-T}$ denotes the transposed of the inverse of $A_\Lambda$. 
As a simple consequence, the determinant of the polar lattice is given by $d(\Lambda^\circ) = 1/d(\Lambda)$.
Notice also that $(\Z^2)^\circ = \Z^2$.

Next, we need to introduce the notion of successive minima.

\begin{defi}[Successive minima]

    Let $K \subset \R^2$ be a convex body and $\Lambda \subset \R^2$ be a lattice. 
    The \textit{$i$-th successive minimum of $K$ with respect to $\Lambda$} is defined as
    \begin{equation*}
        \lambda_i(K, \Lambda) \defeq \min \Big\{ \lambda > 0 : \dim \big( \spann(\lambda K \cap \Lambda) \big) \ge i \Big\}, \qquad \qquad i = 1, 2.
    \end{equation*}
    
\end{defi}

The $i$-th successive minimum is the smallest dilation factor $\lambda > 0$ 
for which $\lambda K$ contains $i$ linearly independent lattice points of $\Lambda$ (see \cref{fig:2.latticefsm}).
In particular, we have that $\lambda_1 \le \lambda_2$.
We will mostly focus on the first successive minimum, which may be written as
\begin{equation*}
    \lambda_1(K, \Lambda) = \min \Big\{ \lambda > 0 : \lambda K \cap \Lambda \ne \cb{0} \Big\}.
\end{equation*}

\begin{figure}[t]
    \centering
    \begin{tikzpicture} [scale = 2]       
        \draw[fill = darkgreen!50, scale = 1.66] (- 0.5, - 0.75) -- (1, 0) -- (0.5, 0.75) -- (- 1, 0) -- cycle;
        \draw[fill = darkgreen!75] (- 0.5, - 0.75) -- (1, 0) -- (0.5, 0.75) -- (- 1, 0) -- cycle;
        \draw[fill = darkgreen, scale = 0.6] (- 0.5, - 0.75) -- (1, 0) -- (0.5, 0.75) -- (- 1, 0) -- cycle;
        \draw[thin, dashed] (- 2, - 1) grid (2, 1);
        \foreach \i in {- 2,..., 2} {
            \foreach \j in {- 1,..., 1} {
                \fill [blue!60!black] (\i, \j) circle (0.05) node [below]{\color{black}{\scriptsize{$(\i, \j)$}}};
            }
        }
        \draw[-latex, thick] (- 2.5, 0) -- (2.5, 0) node[below]{$x_1$};
        \draw[-latex, thick] (0, - 1.5) -- (0, 1.5) node[left]{$x_2$};
        \draw (0.25, 0.2) node {\scriptsize{$K$}};
        \draw (0.4, 0.55) node {\scriptsize{$\lambda_1 K$}};
        \draw (1, 1.3) node {\scriptsize{$\lambda_2 K$}};
    \end{tikzpicture}
    \caption{A convex body $K \subset \R^2$ and its dilations, with the successive minima $\lambda_1$ and $\lambda_2$ as dilation factors,
    for the standard lattice $\Z^2$.}
    \label{fig:2.latticefsm}
\end{figure}

We now introduce the notion of lattice width, which provides another geometric interpretation of the first successive minimum.

\begin{defi}[Lattice width] \label{defi:2.lattice_width}

    Let $K \subset \R^2$ be a convex body and $\Lambda \subset \R^2$ be a lattice.
    The \textit{lattice width $w_\Lambda (K)$ of $K$ with respect to $\Lambda$} is defined as
    \begin{equation*}
        w_\Lambda (K) \defeq \min_{z \in \Lambda^\circ \setminus \cb{0}} \rb{\max_{x, y \in K} z \cdot (x - y)} 
        = \min_{z \in \Lambda^\circ \setminus \cb{0}} \big( \phi_K(z) + \phi_K(- z) \big),
    \end{equation*}
    where $\phi_K$ is the support function of $K$. We denote the set of all minimizing lattice directions for $K$ by
    \begin{equation*}
        \D_\Lambda(K) \defeq \Big\{ z \in \Lambda^\circ \setminus \cb{0} : w_\Lambda (K) = \phi_K(z) + \phi_K(- z) \Big\}.
    \end{equation*}
    
\end{defi} 

\begin{rem}

    The lattice width is related to the first successive minimum of $K$ by the identity
\begin{equation} \label{eq:2.firstwidth}
    w_\Lambda (K) = 2 \lambda_1((K_\sym)^\circ, \Lambda^\circ).
\end{equation}
Indeed, denoting by $\phi_\sym$ the support function of $K_\sym$, we have that $\phi_\sym(x) = \frac{\phi(x) + \phi(- x)}{2}$ for $x \in \R^2$, so that
\begin{align*}
    w_\Lambda(K) &= 2 \min_{z \in \Lambda^\circ \setminus \cb{0}} \phi_\sym(z) 
    = 2 \min \Big\{ \lambda > 0 : \text{ there exists } z \in \Lambda^\circ \setminus \cb{0} \text{ with } \phi_\sym(z) \le \lambda \Big\} \\
    &= 2 \min \Big\{ \lambda > 0 : \text{ there exists } z \in \lambda (K_\sym)^\circ \cap \Lambda^\circ \setminus \cb{0} \Big\} = 2 \lambda_1((K_\sym)^\circ, \Lambda^\circ).
\end{align*}

\end{rem}

\begin{rem} \label{rem:2.latticetrans}

    The case of a general lattice $\Lambda = A_\Lambda \Z^2$ for some $A_\Lambda \in \GL(\R^2)$ can be traced back to the standard lattice $\Z^2$:
    indeed given a convex body $K \subset \R^2$ we have that
    \begin{align*}
        \lambda_1(K, \Lambda) &= \min \Big\{ \lambda > 0 : \lambda K \cap \Lambda \ne \cb{0} \Big\} \\
        &= \min \Big\{ \lambda > 0 : \lambda A_\Lambda^{- 1} K \cap \Z^2 \ne \cb{0} \Big\} = \lambda_1 (A_\Lambda^{- 1} K , \Z^2),
    \end{align*}
    and in turn
    \begin{align*}
         w_\Lambda(K) &= 2 \lambda_1((K_\sym)^\circ, \Lambda^\circ) = 2 \lambda_1(A_\Lambda^T (K_\sym)^\circ, \Z^2) \\
         &= 2 \lambda_1((A_\Lambda^{- 1} K_\sym)^\circ, \Z^2) = 2 \lambda_1((A_\Lambda^{- 1} K)_\sym)^\circ, \Z^2) = w_{\Z^2}(A_\Lambda^{- 1} K).
    \end{align*}
    As a consequence, the first successive minimum of a convex body
    with respect to the integer lattice $\Z^2$, as well as its lattice width, is invariant under unimodular transformations.
    For later use we also remark that
    \begin{equation*}
        \abs{A_\Lambda^{- 1} K} = \frac{\abs{K}}{\abs{\det A_\Lambda}} = \frac{\abs{K}}{d(\Lambda)}.
    \end{equation*}
    
\end{rem}

Henceforth, if $\Lambda = \Z^2$ we simply write $\lambda_1(K)$, $w(K)$, and $\D(K)$ in place of $\lambda_1(K, \Z^2)$, $w_{\Z^2}(K)$, and $\D_{\Z^2}(K)$, respectively.

The following classical result is due to Minkowski, see \cite[Section~9]{GL87}.

\begin{thm}[Minkowski's second Theorem]

    Let $K \subset \R^2$ be a symmetric convex body.
    Then
    \begin{equation*}
        \lambda_1(K) \lambda_2(K) \abs{K} \le 4. 
    \end{equation*}
    
\end{thm}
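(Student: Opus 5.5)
The plan is the classical planar argument of Minkowski (in Davenport's form): reduce to a normalized position by a unimodular change of variables, then compress the body horizontally and apply a Blichfeldt-type packing argument. Since $\lambda_i$ and areas change in a controlled way under linear maps (\cref{rem:2.latticetrans}), and unimodular maps preserve both $\Z^2$ and areas, we may freely normalize.

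\emph{Normalization.} Let $z_1\in\lambda_1K\cap\Z^2\setminus\cb{0}$. Then $z_1$ is primitive, since otherwise $z_1/k\in\Z^2$ would lie in a smaller dilate of $K$. By Bézout's Lemma we complete $z_1$ to a unimodular basis, and after a unimodular transformation we may assume $z_1=e_1$. By definition of $\lambda_2$, every lattice point in $\mathrm{int}(\lambda_2K)$ lies on the line $\R e_1$. In particular $\mathrm{int}(\lambda_2K)$ contains no lattice point with $x_2\neq0$.

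\emph{Compression.} Write $S=\mathrm{int}(\lambda_2K)$. For each height $t$, let $S_t$ be the horizontal section of $S$ at height $t$, an open interval with midpoint $m(t)$; by symmetry, $m(-t)=-m(t)$. Define $S'$ by shrinking every section $S_t$ about $m(t)$ by the factor $\lambda_1/\lambda_2$. By Fubini,
\[
\abs{S'}=\frac{\lambda_1}{\lambda_2}\,\abs{\lambda_2K}=\lambda_1\lambda_2\abs{K},
\]
so it suffices to show $\abs{S'}\le4$. For this I would prove that the translates $\tfrac12S'+z$, $z\in\Z^2$, are pairwise disjoint. Since these translates then tile disjointly inside the torus, $\abs{\tfrac12S'}\le1$, which is exactly the claim.

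\emph{Disjointness.} Suppose $u,v\in S'$ with $\tfrac12(u-v)=z\in\Z^2\setminus\cb{0}$, and write $u=(u_1,t)$, $v=(v_1,s)$. There are two cases.

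\emph{Case $t=s$.} Here $z=(k,0)$ with $k\neq0$, so $\abs{u_1-v_1}\ge2$. A symmetric convex body has its longest horizontal chord at height $0$. Moreover $e_1\in\partial(\lambda_1K)$, since $\mathrm{int}(\lambda_1K)$ contains no nonzero lattice point. Hence the central chord of $S$ has length $2\lambda_2/\lambda_1$, and every compressed section has length at most $2$. It is open, so this contradicts $\abs{u_1-v_1}\ge2$.

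\emph{Case $t\neq s$.} Here $z_2=\tfrac12(t-s)$ is a nonzero integer. Decompress: take $u^*=(m(t)+\tfrac{\lambda_2}{\lambda_1}(u_1-m(t)),t)\in S$, and similarly $v^*=(m(s)+\tfrac{\lambda_2}{\lambda_1}(v_1-m(s)),s)\in S$. By symmetry $-v^*$ and the reflected point $(-m(s)+\cdots,-s)$ also lie in $S$. By convexity the midpoint $\tfrac12(u^*-v^*)$ lies in $S$ and has second coordinate $z_2$. The goal is to choose horizontal shifts within the sections, exploiting the full length of $S_t$ and $S_{s}$ (which is $\lambda_2/\lambda_1$ times that of the compressed ones), so as to hit a point with integer first coordinate. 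This would produce a lattice point $(k',z_2)\in S$ with $z_2\neq0$, contradicting the normalization.

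\emph{Main obstacle.} The case $t\neq s$ is the delicate step. Concretely, one must verify that the set of first coordinates
\[
\cb{\tfrac12(a-b) : (a,t)\in S,\ (b,s)\in S}
\]
is an open interval of length $>1$ whenever $\tfrac12(u_1-v_1)$ is attained by compressed points. The interval is centred at $\tfrac12(m(t)-m(s))$ and has length $\tfrac12(\abs{S_t}+\abs{S_s})$, while the compressed difference sits at the same centre with length $\tfrac{\lambda_1}{\lambda_2}$ times that. So I expect to reduce the case to the elementary observation that any real number within the compressed range lies in an interval of length at least $1$ around it inside the uncompressed range. If this bookkeeping fails, I would fall back on the alternative of reducing to the case where $\cb{z_1,z_2}$ is a basis of $\Z^2$, via the triangle argument with Bézout, and estimate $\abs{K}$ directly from the two supporting-line constraints.
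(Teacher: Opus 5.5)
The paper gives no proof of this statement; it quotes it from \cite{GL87}. Your setup is the classical planar compression argument: normalize $z_1=e_1$, compress $S=\mathrm{int}(\lambda_2K)$ horizontally about the midpoints of its sections, and show the lattice translates of $\tfrac12S'$ are disjoint. The normalization, the area identity $\abs{S'}=\lambda_1\lambda_2\abs{K}$, the packing conclusion and the case $t=s$ are all correct.

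The gap is the case $t\neq s$, which you explicitly leave open. Decompressing to $u^*,v^*$ and then looking for horizontal shifts that give an integer first coordinate is a detour. The reduction you propose at the end is also false as a fact about intervals. Write $c=\lambda_1/\lambda_2$. The compressed range $\tfrac12(S'_t-S'_s)$ and the uncompressed range $\tfrac12(S_t-S_s)$ have the same centre, and the margin between them on each side is $\tfrac{1-c}{4}\bigl(\abs{S_t}+\abs{S_s}\bigr)$. This margin is zero when $\lambda_1=\lambda_2$ and small whenever the sections are short. So in general there is no interval of length $1$ around a point of the compressed range that stays inside the uncompressed one, and that bookkeeping cannot be completed.

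The missing observation is one line: $S'\subseteq S$, because shrinking an interval about its own midpoint by a factor $c\le 1$ gives a subinterval. Hence $u$ and $v$ already lie in $S$, and by symmetry and convexity $z=\tfrac12u+\tfrac12(-v)\in S$. You never need to search for an integer first coordinate, since $z\in\Z^2$ by assumption. Because $z_2=\tfrac12(t-s)\neq0$, $z$ is a lattice point of $\mathrm{int}(\lambda_2K)$ off the line $\R e_1$, which contradicts your normalization. With this line replacing your treatment of the case $t\neq s$, the proof is complete. The case $t\neq s$ uses only $S'\subseteq S$ and the defining property of $\lambda_2$. The compression itself (every section of $S'$ has length at most $2$) matters only in the case $t=s$.
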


Next, we state and prove a sharp lower bound on the area of a convex body in terms of its lattice width, 
both in the symmetric (\cref{prop:2.symarea}) and the non-symmetric (\cref{prop:2.asymarea}) cases, 
together with a characterization of the convex bodies for which equality is attained (see \cref{fig:2.symeq}). 
These results are well known to experts in the geometry of numbers, but we include proofs for completeness.

\begin{prop} \label{prop:2.symarea}

    Let $K \subset \R^2$ be a symmetric convex body. Then
    \begin{equation} \label{eq:2.symarea}
        \abs{K} \ge \frac{w(K)^2}{2},
    \end{equation}
    and equality holds if and only if, up to dilations and unimodular transformations, $K$ is a parallelogram of the form
    \begin{equation*}
        P_s \defeq \conv \Big\{ \pm (s, 1), \pm (1, 0) \Big\}, \qquad s \in [0, 1).
    \end{equation*}

\end{prop}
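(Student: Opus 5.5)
Since $K$ is symmetric we have $K_\sym = K$, so \eqref{eq:2.firstwidth} gives $w(K) = 2\lambda_1(K^\circ)$. The inequality then follows by combining Minkowski's second Theorem for $K^\circ$ with Mahler's inequality (\cref{thm:2.mahler}). Indeed, $\lambda_1(K^\circ)^2 \le \lambda_1(K^\circ)\lambda_2(K^\circ) \le 4/\abs{K^\circ}$. Hence
\begin{equation*}
    w(K)^2 = 4\lambda_1(K^\circ)^2 \le \frac{16}{\abs{K^\circ}} \le \frac{16\abs{K}}{8} = 2\abs{K},
\end{equation*}
which is \eqref{eq:2.symarea}.

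For the equality case, both inequalities must be equalities. Mahler's equality case forces $K^\circ$, and hence $K$, to be a parallelogram. We also need $\lambda_1(K^\circ) = \lambda_2(K^\circ)$ together with equality in Minkowski's theorem. Rather than relying on a characterization of equality in Minkowski's theorem, I would argue directly on the parallelogram, which is more robust. Dilate so that $w(K)=2$, i.e.\ $\lambda_1(K^\circ)=1$. Then $\phi_K(z)\ge 1$ for every $z\in\Z^2\setminus\{0\}$, with equality for some primitive $z$. After a unimodular change of variables (\cref{rem:2.latticetrans}, acting on $K$ by $U$ and on directions by $U^{-T}$) we may assume $z = e_2$. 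Then $K$ lies in the strip $\{\abs{x_2}\le 1\}$ and touches both lines $x_2=\pm1$, and $\abs{K}=2$.

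The key step, and the one I expect to be the main obstacle, is the direct classification of symmetric parallelograms $K$ of area $2$ inside the strip, touching both lines, with $\phi_K(z)\ge 1$ for all nonzero integer $z$. A symmetric parallelogram touching $x_2=\pm1$ either has a vertex $\pm(a,1)$ on these lines, or has full sides on them.

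In the first case, write the other pair of vertices as $\pm(b,c)$ with $\abs{c}\le 1$. The area is $2\abs{ac-b}\cdot$(normalization), and setting it equal to $2$ pins down $b$ in terms of $a$ and $c$. The condition $\phi_K(e_1)\ge1$ gives $\max(\abs a,\abs b)\ge 1$. Next I would apply shears $\begin{pmatrix}1&k\\0&1\end{pmatrix}$, which are unimodular and preserve $e_2$ as a direction, to reduce $a$ to $[0,1)$. A further test with $z = e_1 \pm e_2$ should then force $c=0$ and $b=\pm1$, giving $K = P_a$.

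In the second case, $K$ has sides on $x_2=\pm1$ and area $2$, so its horizontal side length is $1$. I expect the conditions $\phi_K(z)\ge1$ for suitable $z$ such as $(1,k)$ to force $K$ to be unimodularly equivalent to the same family, for example via a reflection swapping the roles of sides and vertices.

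Conversely, I would check that $P_s$ has area $2$ and $w(P_s)=2$. The bound $w(P_s)\ge 2$ follows from the inequality just proved, so it suffices to compute $\phi_{P_s}(e_2)+\phi_{P_s}(-e_2)=2$.

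The bookkeeping of which integer test directions rule out the non-extremal parallelograms is the delicate part. If it becomes messy, I would instead invoke the known equality case of Minkowski's second theorem in the plane for parallelograms: the successive minima are attained on a basis of $\Z^2$ up to unimodular change. This would give that $K^\circ$ has its vertices or side midpoints at lattice points, and the normal form $P_s$ would then follow by polarity.
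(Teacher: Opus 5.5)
Your outline works, but two steps fail as written. The inequality is exactly the paper's first proof (Minkowski's second theorem for $K^\circ$ plus Mahler), and the paper's first proof also reduces the equality case to parallelograms via Mahler's equality case. Where you then classify parallelograms by hand, the paper's second proof instead inscribes the polygon $\conv\{\pm(a,b),\pm(c,d),\pm(e,f)\}$, built from support points in the directions $e_2$, $(1,-m)$, $(1,-m-1)$. That gives the inequality and the equality case together, elementarily, without Mahler or Minkowski. Your hand classification is shorter once Mahler's equality case is granted.

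First, the converse is argued in the wrong direction. Applying \eqref{eq:2.symarea} to $P_s$ gives $w(P_s)\le\sqrt{2\abs{P_s}}=2$, which is an upper bound. Computing $\phi_{P_s}(e_2)+\phi_{P_s}(-e_2)=2$ gives the same upper bound again, so nothing in your argument yields $w(P_s)\ge 2$. You must check directly that $\phi_{P_s}(p,q)=\max\{\abs{p},\abs{sp+q}\}\ge 1$ for all $(p,q)\in\Z^2\setminus\{0\}$. This is immediate: if $p\ne 0$ the first term is at least $1$, and otherwise the second term equals $\abs{q}\ge 1$.

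Second, your claim that the tests $z=e_1\pm e_2$ force $c=0$ and $b=\pm1$ is false when $a=0$.
\begin{itemize}
\item After the shear, $a\in[0,1)$, and $\phi_K(e_1)\ge 1$ gives $\abs{b}\ge 1$. Choose the sign of $(b,c)$ with $b\ge 1$; then the area condition $\abs{ac-b}=1$ gives $b=1+ac$, so $ac\ge 0$.
\item The test $e_1+e_2$ is vacuous, since $\phi_K(1,1)\ge 1+a$.
\item The test $e_1-e_2$ gives $\abs{1-c(1-a)}\ge 1$. This forces $c=0$, hence $K=P_a$, only when $a>0$.
\end{itemize}
If $a=0$, no test constrains $c$. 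Every $\conv\{\pm(0,1),\pm(1,c)\}$ with $\abs{c}\le 1$ has area $2$ and $\phi_K(p,q)=\max\{\abs{q},\abs{p+cq}\}\ge 1$. For example, $c=\tfrac12$ gives an extremal body that is not of the form $P_s$ in your normalization.

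To reach $P_{\abs{c}}$ you need an extra unimodular map: swap the coordinates, then reflect if $c<0$, and shear to $P_0$ if $\abs{c}=1$. This is precisely the paper's second family $P^m_t$, which it handles with a separate matrix. Your ``case 2'' (sides on $x_2=\pm1$) is just $\abs{c}=1$ in case 1 and falls into this $a=0$ subfamily, so the same fix covers it. With these two repairs your argument is complete.
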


\begin{figure}[ht]
    \centering
    \begin{tikzpicture} [scale = 1.5]
        \draw[fill = darkgreen!50] (0, - 1) -- (- 1, 0) -- (0, 1) -- (1, 0) -- cycle;
        \draw(0.75, 0.75) node{$P_0$};
        \foreach \x in {- 1, 1}
            \draw (\x, 0.1) -- (\x, - 0.1) node[below] {\small \x};
        \foreach \y in {- 1, 1}
            \draw (0.1, \y) -- (- 0.1, \y) node[left] {\small \y};
        \draw[-latex, thick] (- 1.5, 0) -- (1.5, 0) node[below]{$x_1$};
        \draw[-latex, thick] (0, - 1.5) -- (0, 1.5) node[left]{$x_2$};
    \end{tikzpicture}
    \hspace{0.2 cm}
    \begin{tikzpicture} [scale = 1.5]
        \draw[fill = darkgreen!50] (- 0.5, - 1) -- (- 1, 0) -- (0.5, 1) -- (1, 0) -- cycle;
        \draw(1, 1) node{$P_{1/2}$};
        \foreach \x in {- 1, 1}
            \draw (\x, 0.1) -- (\x, - 0.1) node[below] {\small \x};
        \foreach \y in {- 1, 1}
            \draw (0.1, \y) -- (- 0.1, \y) node[left] {\small \y};
        \draw[-latex, thick] (- 1.5, 0) -- (1.5, 0) node[below]{$x_1$};
        \draw[-latex, thick] (0, - 1.5) -- (0, 1.5) node[left]{$x_2$};
    \end{tikzpicture}
    \hspace{0.2 cm}
    \begin{tikzpicture} [scale = 1.5]
        \draw[fill = darkgreen!50] (0, 1) -- (- 1, - 1) -- (1, 0) -- cycle;
        \draw(0.75, 0.75) node{$T$};
        \foreach \x in {- 1, 1}
            \draw (\x, 0.1) -- (\x, - 0.1) node[below] {\small \x};
        \foreach \y in {- 1, 1}
            \draw (0.1, \y) -- (- 0.1, \y) node[left] {\small \y};
        \draw[-latex, thick] (- 1.5, 0) -- (1.5, 0) node[below]{$x_1$};
        \draw[-latex, thick] (0, - 1.5) -- (0, 1.5) node[left]{$x_2$};
    \end{tikzpicture}    
    \caption{Two parallelograms realizing the equality in \eqref{eq:2.symarea}, and the triangle $T$ achieving the equality in \eqref{eq:2.asymarea}.}
    \label{fig:2.symeq}
\end{figure}

For this result we give two different proofs: the first one uses classical inequalities from the geometry of numbers,
whereas the second is an explicit construction which allows to characterize the equality case in a precise manner.

\begin{proof}[First proof of \cref{prop:2.symarea}]

    By Minkowski's second theorem and Mahler's inequality we have
    \begin{equation*}
        8 \le \abs{K} \abs{K^\circ} \le \frac{4 \abs{K}}{\lambda_1(K^\circ) \lambda_2(K^\circ)},
    \end{equation*}
    which implies that
    \begin{equation*}
        \abs{K} \ge 2 \lambda_1(K^\circ) \lambda_2(K^\circ) \ge 2 \lambda_1(K^\circ)^2 \xupref{eq:2.firstwidth}{=} \frac{w(K)^2}{2}.
    \end{equation*}
    If the equality holds, by the characterization of the equality case in Mahler's inequality, $K$ must be a parallelogram. 
    In turn, one can explicitly characterize the parallelograms which realize the equality in \eqref{eq:2.symarea}, as discussed in the second proof below. \qedhere

\end{proof}

\begin{proof}[Second proof of \cref{prop:2.symarea}]

    We divide the proof into two steps.
    In the first step we establish the inequality under the assumption that $e_2 \in \D(K)$, meaning that the vertical direction minimizes the lattice width. 
    The core strategy is to inscribe into $K$ an origin-symmetric polygon $H$---which is generically a hexagon but might degenerate into a parallelogram---
    and bound its area from below in terms of the square of the lattice width. 
    We select two opposite vertices $\pm (a, b)$ on $\partial K$ where the tangent support lines are horizontal; 
    the remaining two pairs of opposite vertices are then chosen as extreme points of $K$ along integer directions $(1, - m)$, $(1, - m - 1)$ 
    which are ``almost'' orthogonal to the vector $(a, b)$.
    In the second step we obtain the general case thanks to Bézout's Lemma.

    \vspace{5 pt}
    \noindent
    \textit{Step one}:
    let $K$ be a symmetric convex body with $e_2 \in \D(K)$, so that $w(K) = 2 \phi(e_2)$, where $\phi = \phi_K$ denotes the support function of $K$.
    Then, there exists a vector $(a, b) \in K$ such that $\phi(\pm e_2) = (\pm a, \pm b) \cdot (\pm e_2) = b > 0$.
    We may also choose $a \in \R$ such that $a \ge a'$ whenever $(a', b) \in K$.
    Then, there exists $m \in \Z$ with
    \begin{equation} \label{eq:2.symineqab}
        m b \le a < (m + 1) b.
    \end{equation}
    By assumption, $\phi(e_2) \le \phi(1, - m)$ and $\phi(e_2) \le \phi(1, - m - 1)$, therefore
    \begin{align}
        b &= \phi(e_2) \le \phi(1, - m) = \max_{(x_1, x_2) \in K} (x_1 - m x_2) = c - m d, \label{eq:2.symineq1} \\
        b &= \phi(e_2) \le \phi(1, - m - 1) = \max_{(x_1, x_2) \in K} (x_1 - (m + 1) x_2) = e - (m + 1) f, \label{eq:2.symineq2}
    \end{align}
    for some $(c, d), (e, f) \in K$. Notice that, by definition of $b$, we immediately deduce $\abs{d}, \abs{f} \le b$. 
    Moreover, we also have $d, f < b$: indeed, if it were $d = b$, then by \eqref{eq:2.symineq1} we would have $c \ge (m + 1) b > a$ 
    and the pair $(c, b)\in K$ would violate the initial choice of $a$; similarly for $f < b$.
    
    By \eqref{eq:2.symineqab} the pair $(c,d)$ must be different from $\pm (a, b)$, and the pair $(e,f)$ must be different from $(a,b)$ (but could possibly coincide with $(-a,-b)$). 
    Therefore we have at least two pairs of opposite points and the set $H = \conv \cb{\pm (a, b), \pm (c, d), \pm (e, f)}$ is a nondegenerate symmetric polygon 
    (either a hexagon or a parallelogram in case $(e,f) \in \conv \cb{\pm (a, b), \pm (c, d)}$).
    In order to prove the inequality \eqref{eq:2.symarea}, we show that $\abs{H} \ge 2 b^2$.
    We have three  different cases to study.
    \begin{itemize}
        \item 
            If $d \le 0$, by \eqref{eq:2.symineqab} and \eqref{eq:2.symineq1} we observe that
            \begin{equation*}
                b c - a d \ge m d b + b^2 - a d = b^2 - d (a - m b) \ge b^2,
            \end{equation*}
            which in turn implies that the parallelogram $P = \conv \cb{\pm (a, b), \pm (c, d)}$ satisfies
            \begin{equation*}
                \abs{H} \ge \abs{P} = 2 \abs{a d - b c} = 2 (b c - a d) \ge 2 b^2.
            \end{equation*}
            Moreover, equality holds if and only if $H = P$, and either $d = 0$ and $c = b$, or $a = m b$ and $c = b + m d$.

        \item 
            If $f > 0$, by means of \eqref{eq:2.symineqab} and \eqref{eq:2.symineq2} we similarly have that
            \begin{equation*}
                b e - a f \ge (m + 1) f b + b^2 - a f = b^2 + f ((m + 1) b - a) \ge b^2, 
            \end{equation*}
            so that the parallelogram $P = \conv \cb{\pm (a, b), \pm (e, f)} $ satisfies
            \begin{equation*}
                \abs{H} \ge \abs{P} = 2 \abs{a f - b e} = 2 (b e - a f) \ge 2 b^2
            \end{equation*}
            (notice that $P$ is non degenerate as the case $(e, f) = (- a, - b)$ is excluded by the assumption $f > 0$). Moreover, equality can never be attained.

        \item 
            If instead $d > 0$ and $f \le 0$, we see that
            \begin{align*}
                \abs{H} &= (- a f + b e) + (e d - f c) + (b c - a d) = - a f - a d + (b + d) e + (b - f) c \\
                &\ge - a f - a d + (b + d) ((m + 1) f + b) + (b - f) (m d + b) \\
                &= 2 b^2 - a f - a d + (m + 1) b d + m f b + d f,
            \end{align*}
            recalling that $b \ge \abs{d}, \abs{f}$.
            Notice that if $d + f \le 0$ one has
            \begin{equation*}
                - a f - a d + (m + 1) b d + m f b + d f = (d + f) (m b - a) + d (b + f) \ge 0,
            \end{equation*}
            whereas, if $d + f > 0$, then
            \begin{equation*}
                - a f - a d + (m + 1) b d + m f b + d f = (d + f) ((m + 1) b - a) - f (b - d) > 0.
            \end{equation*}
            We conclude again that $\abs{H} \ge 2 b^2$.
            
            To discuss the equality case, if $d + f \le 0$ then $f = - b$ and either $d = - f = b$ (which is impossible, since $d<b$) or $a = m b$. 
            Moreover, $e = (m + 1) f + b = - m b = - a$, that is, we are in the case $(e, f) = (- a, - b)$. 
            Hence $H = \conv \cb{\pm (a, b),\pm (c, d)}$ is a parallelogram with $a = mb$ and $c = b + m d$.
            
            Instead, if $d + f > 0$, it is evident that the equality cannot be attained.
            
    \end{itemize}
    As a consequence, since $H \subset K$ by convexity of the latter, we have 
    \begin{equation*}
        \abs{K} \ge \abs{H} \ge 2 b^2 = \frac{w(K)^2}{2}.
    \end{equation*}
    By the previous analysis, equality holds if and only if $K=H=\conv\cb{\pm (a, b),\pm(c, d)}$ is a parallelogram such that
    either $d = 0$ and $c = b$, or $a = m b$ and $c = b + m d$. In particular, up to a rescaling $K$ belongs to one of the following two families of parallelograms:
    \begin{equation*}
        P_s \defeq \conv \cb{\pm (s, 1), \pm (1, 0)}, \qquad \qquad P_t^m \defeq \conv \cb{\pm (m, 1), \pm (1 + m t, t)},
    \end{equation*}
    for $t, s \in \R$, $m \in \Z$. Notice that any parallelogram $P_t^m$ of the second family can be transformed into a parallelogram of the first family by the unimodular matrix
    $U = \big( \begin{smallmatrix} - 1 & m + 1 \\ 1 & - m \end{smallmatrix} \big)$.
    Moreover, any parallelogram $P_s$ with $\abs{s} \ge 1$ of the first family can be transformed into a parallelogram $P_{s'}$ with $\abs{s'} < 1$ by the unimodular matrix
    $U = \big( \begin{smallmatrix} 1 & - \lfloor s \rfloor \\ 0 & 1 \end{smallmatrix} \big)$, where $\lfloor s \rfloor$ is the integer part of $s$.
    Finally, if $s' < 0$ a reflection about the vertical axis (which is again a unimodular transformation) maps $P_{s'}$ into $P_{- s'}$.

    \vspace{5 pt}
    \noindent
    \textit{Step two}:
    suppose that $(p_0, q_0) \in \D(K)$.
    Since $p_0$ and $q_0$ are coprime, by Bézout's Lemma there exist $m, n \in \Z$ with $p_0 m + q_0 n = 1$. 
    Consider the unimodular matrix $U \in \GL(\Z^2)$ defined as
    \begin{equation} \label{eq:2.proof_sym_1}
        U \defeq
        \begin{pmatrix}
            n & -m \\
            p_0 & q_0
        \end{pmatrix}.
    \end{equation}
    Then $U K$ is a symmetric convex body with $\phi_{UK}(y)=\phi_K(U^Ty)$
    and, since $U^Te_2=\bigl(\begin{smallmatrix} p_0 \\ q_0 \end{smallmatrix}\bigr)$, it follows that $e_2 \in \D(U K)$. Hence by Step~one  
    \begin{equation*}
        \abs{K} = \abs{U K} \ge \frac{w(U K)^2}{2} = \frac{w(K)^2}{2},
    \end{equation*}
    where we used the fact that the lattice width is invariant under unimodular transformations, as discussed in \cref{rem:2.latticetrans}. 
    Eventually, equality holds whenever $K$ coincides with the unimodular image of one of the parallelograms characterized in Step one.
    \qedhere
    
\end{proof}

If $K$ is not symmetric, the inequality \eqref{eq:2.symarea} might fail: consider for instance the triangle $T = \conv \cb{(- 1, - 1), (1, 0), (0, 1)}$.
In the general case we have the following theorem, originally proved in \cite{FM74,Mak78} (see also \cite[Theorem~1.1]{HX19} for a further refinement).

\begin{prop} \label{prop:2.asymarea}

    Let $K \subset \R^2$ be a convex body. Then
    \begin{equation} \label{eq:2.asymarea}
        \abs{K} \ge \frac{3}{8} \, w(K)^2,
    \end{equation}
    and equality holds if and only if, up to dilations, translations, and unimodular transformations, $K$ is equal to the triangle
    \begin{equation*}
        T \defeq \conv \Big\{ (- 1, - 1), (1, 0), (0, 1) \Big\}.
    \end{equation*}

\end{prop}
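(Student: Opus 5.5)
The natural route mirrors the first proof of \cref{prop:2.symarea}, replacing Mahler's inequality with Eggleston's inequality (\cref{thm:2.eggleston}).

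\textbf{The inequality.} Set $K_\sym = \frac12(K-K)$, which is a symmetric convex body. Its support function is $\phi_\sym(z)=\frac{\phi_K(z)+\phi_K(-z)}{2}$, so $w(K_\sym)=w(K)$, and by \eqref{eq:2.firstwidth} we have $w(K)=2\lambda_1((K_\sym)^\circ)$. Minkowski's second theorem applied to the symmetric body $(K_\sym)^\circ$ gives
\begin{equation*}
|(K_\sym)^\circ|\le \frac{4}{\lambda_1((K_\sym)^\circ)\lambda_2((K_\sym)^\circ)}\le \frac{4}{\lambda_1((K_\sym)^\circ)^2}=\frac{16}{w(K)^2}.
\end{equation*}
Combining this with Eggleston's inequality $|K|\,|(K_\sym)^\circ|\ge 6$ yields
\begin{equation*}
|K|\ge \frac{6}{|(K_\sym)^\circ|}\ge \frac{6\,w(K)^2}{16}=\frac38\,w(K)^2,
\end{equation*}
which is \eqref{eq:2.asymarea}.

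\textbf{Equality case, necessity.} If equality holds, Eggleston's equality case forces $K$ to be a triangle. Equality must also hold in Minkowski's bound together with $\lambda_1=\lambda_2$. Rather than rely on the equality case of Minkowski's second theorem, which I don't want to invoke as a black box, I would argue directly for triangles. Translate so that a vertex lies at the origin and write $K=\conv\{0,u,v\}$. Since translations and the swap $K\mapsto -K$ preserve both $|K|$ and $w(K)$, the problem reduces to minimizing $|\det(u,v)|/2$ subject to
\begin{equation*}
\max\{|z\cdot u|,\,|z\cdot v|,\,|z\cdot(u-v)|\}\ge w \qquad \text{for all } z\in\Z^2\setminus\{0\}.
\end{equation*}
By unimodular invariance (\cref{rem:2.latticetrans}) and Bézout, as in Step two of the second proof of \cref{prop:2.symarea}, we may assume $e_2\in\D(K)$. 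Then $w(K)$ equals the vertical extent of $K$, and testing the condition with the directions $(1,-m)$ and $(1,-m-1)$ should pin down the triangle up to the shear $\big(\begin{smallmatrix}1&k\\0&1\end{smallmatrix}\big)$, giving $T$. This amounts to a finite case analysis according to which vertex realizes the extremal heights, modelled on the three cases of Step one.

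\textbf{Equality case, sufficiency, and the main obstacle.} Sufficiency is a direct check: $|T|=\frac32$, and for $z=(p,q)$ the width of $T$ is
\begin{equation*}
\max\{|p-q|,\,|2p+q|,\,|p+2q|\}.
\end{equation*}
This is at least $2$ for every $z\ne0$: if $p,q$ have the same sign then $|2p+q|\ge 2$ unless $(p,q)\in\{\pm(1,-1)\}$, where $|p-q|=2$, and the remaining cases are handled similarly. The minimum $2$ is attained, e.g.\ at $z=e_1$. Hence $\frac38\cdot4=\frac32=|T|$. The main obstacle is the necessity argument in the equality case: extracting the precise triangle from the extremal conditions requires the explicit case analysis. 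Alternatively, one could cite the cited characterizations \cite{FM74,Mak78}.
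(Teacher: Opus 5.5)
Your proof of the inequality \eqref{eq:2.asymarea} is exactly the paper's Step one: Minkowski's second theorem for $(K_\sym)^\circ$, combined with Eggleston's inequality and $w(K)=2\lambda_1((K_\sym)^\circ)$. Your sufficiency check for $T$ is also correct. Only the case split there is garbled, since $\pm(1,-1)$ do not have coordinates of the same sign. A clean version: if $\abs{2p+q}\le 1$ and $\abs{p+2q}\le1$, then $3\abs{p+q}\le 2$, so $q=-p$ and $\abs{p-q}=2\abs{p}\ge 2$ for $z\ne0$.

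The genuine gap is the ``only if'' direction of the equality case. Once Eggleston's equality case has reduced matters to triangles, this is the part the two cited inequalities do not settle. After reducing to a triangle with $e_2\in\D(K)$, you only assert that testing $(1,-m)$ and $(1,-m-1)$ ``should pin down'' the triangle through a case analysis you never carry out, or you defer to \cite{FM74,Mak78}. As written, nothing excludes other extremal triangles.

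The missing idea, which is how the paper's Step two proceeds, is a normalization that removes the need for case analysis.
\begin{itemize}
\item Translate so that the two vertices realizing the vertical extent are $\pm(a,b)$ with $b>0$, and the third vertex is $(c,d)$ with $\abs{d}\le b$. Then $\phi_K(x)=\max\{\abs{ax_1+bx_2},\,cx_1+dx_2\}$ and $w(K)=2b$.
\item Write $a=(m+r)b$ with $m\in\Z$ and $r\in[0,1)$.
\item The widths in the directions $(1,-m)$ and $(1,-m-1)$ are $rb+\max\{rb,\abs{c-md}\}$ and $(1-r)b+\max\{(1-r)b,\abs{c-(m+1)d}\}$. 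Requiring both to be at least $2b$ gives $\abs{c-md}\ge(2-r)b$ and $\abs{c-(m+1)d}\ge(1+r)b$.
\item The other alternative in the second condition is impossible: it forces $r=0$ and $\abs{c-(m+1)d}<b$, hence $\abs{c-md}<2b$ by $\abs{d}\le b$, contradicting the first bound.
\item The numbers $c-md$ and $c-(m+1)d$ have the same sign. Otherwise their absolute values would add up to $\abs{d}\le b<3b$.
\end{itemize}
With these facts in hand,
\[
\abs{K}=\abs{bc-ad}=rb\,\abs{c-(m+1)d}+(1-r)b\,\abs{c-md}\ge 2(r^2-r+1)\,b^2\ge\tfrac32\,b^2=\tfrac38\,w(K)^2,
\]
so equality forces $r=\frac12$, $d=0$ and $\abs{c}=\frac32 b$.

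The two signs of $c$ give mirror-image triangles. They are related by $x_1\mapsto -x_1$, not by a shear, since shears fix the height-zero vertex and the midpoint of $\pm(a,b)$. So ``up to the shear'' is not quite what comes out, although both families are unimodularly equivalent to $T$. Your normalization with a vertex at the origin hides this structure, and that is what would force the case distinctions you anticipate.
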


\begin{proof}

    We divide the proof into three steps. 
    In Step one we prove the inequality \eqref{eq:2.asymarea}. 
    The equality case is characterized in the second step under the assumption $e_2 \in \D(K)$, and in the third step in the general case.

    \vspace{5 pt}
    \noindent
    \textit{Step one}:
    by Minkowski's second theorem and Eggleston’s inequality we have
    \begin{equation*}
         6 \le \abs{K} \abs{(K_\sym)^\circ} \le \frac{4 \abs{K}}{\lambda_1((K_\sym)^\circ) \lambda_2((K_\sym)^\circ)},
    \end{equation*}
    which implies that
    \begin{equation*}
        \abs{K} \ge \frac{3}{2} \lambda_1((K_\sym)^\circ) \lambda_2((K_\sym)^\circ) \ge \frac{3}{2} \lambda_1((K_\sym)^\circ)^2 \xupref{eq:2.firstwidth}{=} \frac{3}{8} \, w(K)^2.
    \end{equation*}
    If the equality holds, then $K$ must be a triangle by the equality condition of Eggleston's inequality.
    
    \vspace{5 pt}
    \noindent
    \textit{Step two}:
    Assume now that $K$ satisfies the equality in \eqref{eq:2.asymarea}; as observed in the previous step, $K$ is a triangle.
    Assume further that $e_2 \in \D(K)$, so that $w(K) = \phi(e_2) + \phi(- e_2)$, where $\phi = \phi_K$ denotes the support function of $K$.
    By possibly translating $K$, we may assume that the vertices of $K$ are given by $\pm (a, b), (c, d) \in \R^2$,
    with $\phi(\pm e_2) = (\pm a, \pm b) \cdot (\pm e_2) = b > 0$, $\abs{d} \le b$, and
    \begin{equation*}
        \phi(x_1, x_2) = \max \big\{ \abs{a x_1 + b x_2}, c x_1 + d x_2 \big\}, \qquad \qquad (x_1, x_2) \in \R^2.
    \end{equation*}
    Notice that $K$ might no longer contain the origin in its interior, but this property is not relevant for the inequality \eqref{eq:2.asymarea}.
    Let $m \in \Z$ and $r \in [0, 1)$ be such that 
    \begin{equation*}
        m b \le a < (m + 1) b, \qquad r \defeq \frac{a}{b} - m \in [0, 1).
    \end{equation*}
    By the assumption $e_2 \in \D(K)$ we have the bounds
    \begin{equation} \label{eq:2.proof_asym_1}
    \begin{split}
        2 b &= \phi(e_2) + \phi(- e_2) \le \phi(1, - m) + \phi(- 1, m) \\
        &= \max \big\{ \abs{a - m b}, c - m d \big\} + \max \big\{ \abs{a - m b}, - c + m d \big\} \\
        &= \abs{a - m b} + \max \big\{ \abs{a - m b}, \abs{c - m d} \big\}
        = r b + \max \big\{ r b, \abs{c - m d} \big\} = r b + \abs{c - m d},
    \end{split}
    \end{equation}
    where in the last passage the maximum cannot be achieved by $r b$, or else the inequality would be violated since $r < 1$. 
    Similarly
    \begin{equation} \label{eq:2.proof_asym_2}
    \begin{split}
        2 b &= \phi(e_2) + \phi(- e_2) \le \phi(1, - m - 1) + \phi(- 1, m + 1) \\
        &= \max \big\{ \abs{a - (m + 1) b}, c - (m + 1) d \big\} + \max \big\{ \abs{a - (m + 1) b}, - c + (m + 1) d \big\} \\
        &= (1 - r) b + \max \big\{ (1 - r) b, \abs{c - (m + 1) d} \big\}.
    \end{split}
    \end{equation}
    Notice that if $\abs{c - (m + 1) d} < (1 - r) b$, then \eqref{eq:2.proof_asym_2} yields $r = 0$, and, in turn, $\abs{c - (m + 1) d} < b$. 
    On the other hand, by \eqref{eq:2.proof_asym_1} we also have $\abs{c - m d} \ge 2 b$, and these two inequalities are incompatible in view of the geometric constraint $\abs{d} \le b$.
    Therefore necessarily $\abs{c - (m + 1) d} \ge (1 - r) b$.

    Summing up, we can rewrite \eqref{eq:2.proof_asym_1} and \eqref{eq:2.proof_asym_2} as
    \begin{equation} \label{eq:2.proof_asym_3}
        \abs{c - m d} \ge (2 - r) b, \qquad \abs{c - (m + 1) d} \ge (1 + r) b.
    \end{equation}
    We remark that the two quantities $(c - m d)$ and $(c - (m + 1) d)$ must have the same sign, or else \eqref{eq:2.proof_asym_3} would contradict the constraint $\abs{d} \le b$.
    
    As $K$ attains the equality in \eqref{eq:2.asymarea}, we have
    \begin{equation*} \label{eq:2.proof_asym_4}
    \begin{split}
        \frac{3}{2} \, b^2 &= \frac{3}{8} \, w(K)^2 = \abs{K} = \abs{a d - b c}  = \abs{r b (c - (m + 1) d) + (1 - r) b (c - m d)} \\
        & = r b \abs{ c - (m + 1) d} + (1 - r) b \abs{c - m d} \xupref{eq:2.proof_asym_3}{\ge} r (1 + r) b^2 + (1 - r) (2 - r) b^2 = 2 (r^2 - r + 1) b^2,
    \end{split}
    \end{equation*}
    where the fifth equality follows from the fact that $(c - m d)$ and $(c - (m + 1) d)$ have the same sign.
    We have $r^2 - r + 1\ge \frac{3}{4}$ with equality for $r = \frac{1}{2}$. 
    Hence necessarily $r = \frac{1}{2}$ and the inequalities in \eqref{eq:2.proof_asym_3} are equalities, 
    that is $\abs{c - m d} = \frac{3}{2} b = \abs{c - (m + 1) d}$, i.e. $d = 0$ and $\abs{c} = \frac{3}{2} b$.
   
    In conclusion, if equality holds in \eqref{eq:2.asymarea} and $e_2 \in \D(K)$, then up to a translation and a dilation $K$ is a triangle of the family
    \begin{equation*}
    T_m^i \defeq \conv \cb{(m, 1), (- m - 1, - 1), \rb{ (- 1)^i, 0}}, \qquad \qquad m \in \Z, \ i = 0, 1.
    \end{equation*}
    Finally, notice that any triangle $T_m^i$ can be transformed into $T_0^0$ by applying the unimodular matrix
    $U = \big( \begin{smallmatrix} (- 1)^i & - m \\ 0 & 1 \end{smallmatrix} \big)$, and/or a reflection about the vertical axis (which is a unimodular transformation).

    \vspace{5 pt}
    \noindent
    \textit{Step three}:
    we deduce the general case from Step two by the same argument as in the proof of \cref{prop:2.symarea}.
    Given $(p_0, q_0) \in \D(K)$, using Bézout's Lemma we find $m, n \in \Z$ with $p_0 m + q_0 n = 1$,
    and we consider the unimodular matrix $U$ in \eqref{eq:2.proof_sym_1}.
    Since both sides of \eqref{eq:2.asymarea} are invariant under unimodular transformations, and $e_2 \in \D(U K)$, 
    by the previous step if equality holds in \eqref{eq:2.asymarea} then $K$ is equal (up to translation, rescaling, and unimodular transformation) to $U^{- 1} T$.
    Conversely, it is easily checked that the triangle $T$ satisfies the equality in \eqref{eq:2.asymarea}.
    \qedhere
    
\end{proof}

For a general lattice we have the following two corollaries, which follow from \cref{prop:2.symarea}, \cref{prop:2.asymarea}, and the identities in \cref{rem:2.latticetrans}.

\begin{cor}

    Let $K \subset \R^2$ be a symmetric convex body, and $\Lambda = A_\Lambda \Z^2 \subset \R^2$ be a lattice for some $A_\Lambda \in \GL(\R^2)$. Then,
    \begin{equation*}
        \abs{K} \ge d(\Lambda) \frac{w_\Lambda(K)^2}{2},
    \end{equation*}
    and equality holds if and only if, up to dilations, $K$ is equal to a parallelogram $A_\Lambda U P_s$ for some unimodular matrix $U \in \GL(\Z^2)$ and $s \in [0, 1)$, 
    where $P_s$ is defined as in \cref{prop:2.symarea}.

\end{cor}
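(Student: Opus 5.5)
The plan is to reduce to the standard lattice via the change of variables in \cref{rem:2.latticetrans} and then apply \cref{prop:2.symarea}. Set $K' \defeq A_\Lambda^{-1} K$. Since $A_\Lambda^{-1}$ is linear and invertible, $K'$ is again a symmetric convex body (compact, convex, origin in the interior, $K'=-K'$). By \cref{rem:2.latticetrans} we have $w_\Lambda(K) = w(K')$ and $\abs{K'} = \abs{K}/d(\Lambda)$. Applying \eqref{eq:2.symarea} to $K'$ gives
\begin{equation*}
    \frac{\abs{K}}{d(\Lambda)} = \abs{K'} \ge \frac{w(K')^2}{2} = \frac{w_\Lambda(K)^2}{2},
\end{equation*}
which is the claimed inequality after multiplying by $d(\Lambda)$.

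For the equality case, equality for $K$ holds if and only if equality holds for $K'$ in \eqref{eq:2.symarea}, because every step above is an identity. By \cref{prop:2.symarea} this happens if and only if $K' = \mu U P_s$ for some $\mu>0$, $U\in\GL(\Z^2)$ and $s\in[0,1)$. Equivalently, $K = \mu A_\Lambda U P_s$, i.e.\ up to dilations $K = A_\Lambda U P_s$.

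The only point needing care is the identity $w_\Lambda(K)=w(A_\Lambda^{-1}K)$, which \cref{rem:2.latticetrans} provides. For the converse direction, one should note that a dilation commutes with $A_\Lambda$ and that $w_\Lambda$ is $1$-homogeneous and $\abs{\cdot}$ is $2$-homogeneous, so both sides scale by the same factor $\mu^2$. Hence every dilate of $A_\Lambda U P_s$ attains equality, since $UP_s$ does for $\Z^2$. Translations need not be mentioned, since $K$ is assumed symmetric.
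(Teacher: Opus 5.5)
Your proposal is correct and is essentially the paper's own argument: the paper obtains this corollary directly from \cref{prop:2.symarea} together with the identities $w_\Lambda(K) = w(A_\Lambda^{-1}K)$ and $\abs{A_\Lambda^{-1}K} = \abs{K}/d(\Lambda)$ of \cref{rem:2.latticetrans}. That is exactly your reduction to $K' = A_\Lambda^{-1}K$, including transferring the equality case back through $A_\Lambda$.
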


\begin{cor}

    Let $K \subset \R^2$ be any convex body, and $\Lambda = A_\Lambda \Z^2 \subset \R^2$ be a lattice for some $A_\Lambda \in \GL(\R^2)$. Then,
    \begin{equation*}
        \abs{K} \ge \frac{3}{8} \, d(\Lambda) w_\Lambda(K)^2,
    \end{equation*}
    and equality holds if and only if, up to dilations and translations, $K$ is equal to a triangle $A_\Lambda U T$ for some unimodular matrix $U \in \GL(\Z^2)$,
    where $T$ is defined as in \cref{prop:2.asymarea}.

\end{cor}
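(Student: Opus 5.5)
The corollary follows by reducing to the standard lattice via \cref{rem:2.latticetrans} and then applying \cref{prop:2.asymarea}.

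Set $K' \defeq A_\Lambda^{-1} K$. This is again a convex body, since a linear image of a convex body is one. By \cref{rem:2.latticetrans} we have
\begin{equation*}
    w_\Lambda(K) = w(K') \qquad \text{and} \qquad \abs{K'} = \frac{\abs{K}}{d(\Lambda)} .
\end{equation*}
Applying \cref{prop:2.asymarea} to $K'$ gives $\abs{K'} \ge \frac{3}{8} w(K')^2$. Multiplying by $d(\Lambda)$ yields
\begin{equation*}
    \abs{K} \ge \frac{3}{8} \, d(\Lambda) \, w_\Lambda(K)^2 .
\end{equation*}
The identity $w_\Lambda(K) = w_{\Z^2}(A_\Lambda^{-1}K)$ was derived in the remark through $K_\sym$ and polarity. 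I would double-check one point there: $(A_\Lambda^{-1}K)_\sym = A_\Lambda^{-1}K_\sym$, which holds by linearity. A more direct check uses the definition: $\phi_{A^{-1}K}(z) = \phi_K(A^{-T}z)$, and $z$ ranges over $\Z^2\setminus\{0\}$ exactly when $A^{-T}z$ ranges over $\Lambda^\circ\setminus\{0\}$.

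For the equality case, equality for $K$ is equivalent to equality for $K'$. By \cref{prop:2.asymarea}, this happens if and only if $K' = \mu U T + v$ for some $\mu>0$, $v\in\R^2$ and $U\in\GL(\Z^2)$. Then
\begin{equation*}
    K = A_\Lambda K' = \mu A_\Lambda U T + A_\Lambda v ,
\end{equation*}
which is $A_\Lambda U T$ up to dilation and translation. Conversely, any such $K$ is mapped by $A_\Lambda^{-1}$ to a dilated and translated unimodular image of $T$. Both sides of \eqref{eq:2.asymarea} are invariant under unimodular maps and translations, and both scale quadratically under dilations, so equality holds for $K'$ and hence for $K$.

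There is one delicate point. \cref{prop:2.asymarea} is stated for convex bodies containing the origin in their interior, but a translation may move the origin outside. Since $\abs{\cdot}$ and $w$ are translation invariant, this causes no problem, as the paper already notes in Step two of that proof. Otherwise the argument is routine bookkeeping, and I do not expect a genuine obstacle.
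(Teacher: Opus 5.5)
Your proposal is correct and follows the same route as the paper: the paper deduces the corollary directly from \cref{prop:2.asymarea} together with the identities $w_\Lambda(K)=w(A_\Lambda^{-1}K)$ and $\abs{A_\Lambda^{-1}K}=\abs{K}/d(\Lambda)$ from \cref{rem:2.latticetrans}. Your direct support-function check of the width identity, via $\phi_{A^{-1}K}(z)=\phi_K(A^{-T}z)$, and your transfer of the equality case through $A_\Lambda$ are both sound.
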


We conclude this section by establishing another lower bound on the area of a convex body $K$ in terms of its lattice width 
and the first successive minimum of its symmetrized body.

\begin{prop} \label{prop:2.torusemb}

    Let $K \subset \R^2$ be any convex body. Then
    \begin{equation} \label{eq:2.torusemb}
        \abs{K} \ge \frac{w(K)}{\lambda_1(K_\sym)}.
    \end{equation}
    Equality holds in \eqref{eq:2.torusemb} if and only of $K$ is a quadrilateral or a triangle with the following properties: $K$ has a diagonal (if $K$ is a quadrilateral) or a side (if $K$ is a triangle) parallel to an integer direction $z\in \Z^2\setminus\cb{0}$ which realizes the first successive minimum of $K_\sym$, and the minimal lattice width is achieved in the direction orthogonal to $z$.
    
\end{prop}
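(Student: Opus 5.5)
Let $z\in\Z^2\setminus\cb{0}$ be a lattice point realizing $\lambda_1(K_\sym)$, so that $z\in\lambda_1(K_\sym)K_\sym$ and $z$ lies on the boundary of this dilate. Necessarily $z$ is primitive. The idea is a slicing argument in the direction $z$. Unwinding the definition of $K_\sym=\frac12(K-K)$, the point $\frac{z}{\lambda_1}$ lies on $\partial K_\sym$. Hence $\frac{2z}{\lambda_1}\in\partial(K-K)$, so the maximal length of a chord of $K$ parallel to $z$ equals $\frac{2\abs{z}}{\lambda_1(K_\sym)}$.

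Let $z^\perp$ be the rotation of $z$ by $\pi/2$; it is again an integer vector. Project $K$ onto the line spanned by $z^\perp$. The width of $K$ in direction $z^\perp$ is
\[
h\defeq \frac{\phi_K(z^\perp)+\phi_K(-z^\perp)}{\abs{z}}\ge \frac{w(K)}{\abs{z}},
\]
by the definition of $w(K)$. Now slice $K$ by lines parallel to $z$ and let $\ell(t)$ be the chord length at height $t$ along $z^\perp/\abs{z}$. By Brunn--Minkowski in dimension one, $\ell$ is concave on an interval of length $h$, and its maximum is $\ell_{\max}=\frac{2\abs{z}}{\lambda_1(K_\sym)}$. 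A nonnegative concave function on an interval of length $h$ with maximum $\ell_{\max}$ dominates the tent function, so
\[
\abs{K}=\int\ell\ge \tfrac12 h\,\ell_{\max}\ge \tfrac12\cdot\frac{w(K)}{\abs{z}}\cdot\frac{2\abs{z}}{\lambda_1(K_\sym)}=\frac{w(K)}{\lambda_1(K_\sym)},
\]
which is \eqref{eq:2.torusemb}. The weights $\abs{z}$ cancel exactly; this is the reason for pairing $z$ with its rotation.

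For the equality case, two inequalities must be equalities. First, $h=w(K)/\abs{z}$, i.e. $z^\perp$ (up to sign) belongs to $\D(K)$, so the minimal lattice width is attained orthogonally to $z$. Second, $\ell$ must be exactly a tent function. The tent condition means $K$ is the convex hull of a maximal chord parallel to $z$ together with one point on each of the two supporting lines parallel to $z$. If the maximal chord lies on one of those supporting lines, $K$ is a triangle with a side parallel to $z$. Otherwise $K$ is a quadrilateral whose diagonal is that chord. Conversely, for such sets both steps are equalities.

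The main technical point is the identification of the longest chord of $K$ in direction $z$ with $2\abs{z}/\lambda_1(K_\sym)$. This follows from $K-K=2K_\sym$ and from the fact that the longest chord in direction $v$ has length $\sup\{t: tv/\abs{v}\in K-K\}$. One should also check, in the equality characterization, that "realizes the first successive minimum" is used exactly as in the construction of $z$. The rest is routine.
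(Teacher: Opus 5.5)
Your proof of the inequality is correct and is essentially the paper's argument. The tent function you integrate is exactly the chord-length function of the polygon $Q=\conv\cb{\pm z/\lambda_1(K_\sym),x,y}$ that the paper inscribes in $K$, after translating so that a chord of length $2\abs{z}/\lambda_1(K_\sym)$ parallel to $z$ is centred at the origin. Both arguments bound $\abs{K}$ below by $\frac12\cdot\frac{2\abs{z}}{\lambda_1(K_\sym)}\cdot\frac{w(K)}{\abs{z}}$; your slicing formulation only makes the equality analysis a little more transparent.

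The equality discussion, however, contains a step that fails. Let $A,B$ be the endpoints of the maximal chord and $X,Y$ the contact points with the two supporting lines parallel to $z$. When the chord is not on a supporting line, you conclude that $K=\conv\cb{A,B,X,Y}$ is a quadrilateral with diagonal $AB$. But $A$ (or $B$) may lie on the segment $[X,Y]$, and then $K$ is a triangle in which $AB$ joins a vertex to an interior point of the opposite side.

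This case genuinely occurs. The chord-length function of a triangle is tent-shaped in \emph{every} direction, so any triangle with $z^\perp\in\D(K)$ for some $z$ realizing $\lambda_1(K_\sym)$ attains equality, whether or not it has a side parallel to $z$. For example, take $T=\conv\cb{(-1,-1),(1,0),(0,1)}$, the triangle of \cref{prop:2.asymarea}. One finds $\abs{T}=\frac32$ and $w(T)=2$, with $\D(T)=\cb{\pm e_1,\pm e_2,\pm(1,-1)}$. Also $\lambda_1(T_\sym)=\frac43$, attained exactly at $\pm e_1,\pm e_2,\pm(1,1)$. Hence $\abs{T}=w(T)/\lambda_1(T_\sym)$, yet the sides of $T$ are parallel to $(2,1),(1,2),(1,-1)$, none of which realizes $\lambda_1(T_\sym)$.

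So the ``only if'' part cannot be proved as stated, unless such a triangle is counted as a degenerate quadrilateral with a straight angle at an endpoint of the chord. The paper's proof passes over the same configuration when it deduces the description from $K=Q$. What your argument actually establishes is the following: equality holds if and only if some $z$ realizing $\lambda_1(K_\sym)$ has $z^\perp\in\D(K)$, and $K$ is either a triangle (with no further condition) or a quadrilateral with a diagonal parallel to $z$. Your converse direction is fine.
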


\begin{proof}

    By definition of $\lambda_1(K_\sym)$, there exists $(p, q) \in \lambda_1(K_\sym) K_\sym \cap \Z^2 \setminus \cb{0}$.
    By symmetry of $K_\sym$, we also have $(- p, - q) \in \lambda_1(K_\sym) K_\sym$.
    By noticing that both sides of the inequality \eqref{eq:2.torusemb} are translation invariant, by possibly translating $K$ we can assume that $\pm (p, q) \in \lambda_1(K_\sym) K$,
    or equivalently, $\frac{\pm (p, q)}{\lambda_1(K_\sym)} \in  K$.
    Now, considering the orthogonal vectors $\pm (- q, p)$, 
    there exist $x, y \in K$ with $\phi(- q, p) = x \cdot (- q, p)$ and $\phi(q, - p) = y \cdot (q, - p)$, so that
    \begin{equation*}
        (x - y) \cdot (-q, p) = \phi(- q, p) + \phi(q, - p) \ge w(K).
    \end{equation*}
    We can estimate the area of $K$ from below by the area of the polygon $Q \defeq \conv \cb{\frac{\pm (p, q)}{\lambda_1(K_\sym)}, x, y}$, which is contained in $K$ by convexity. 
    $Q$ can be decomposed into two triangles sharing the base segment in the direction $(p, q)$ joining $\frac{\pm (p, q)}{\lambda_1(K_\sym)}$ and having third vertices $x$ and $y$ 
    (where one of the triangles might be degenerate); the sum of their heights is exactly the length of the projection of $x - y$ in the direction orthogonal to $(p, q)$ (see \cref{fig:2.torusemb}).

    \begin{figure}[h]
        \centering
        \begin{tikzpicture} [scale = 2]
            \draw[fill = darkgreen!50, rotate = 45] ({sqrt(2)}, 0) arc(0:180:{sqrt(2)} and 0.5) -- (- 1, - 0.25) -- (0, - 0.5) -- (0.5, - 0.5) -- cycle;
            \draw[fill = darkgreen!75, rotate = 45] ({sqrt(2)}, 0) -- (0, 0.5) -- (- {sqrt(2)}, 0) -- (0.5, - 0.5) -- cycle;
            \draw[fill = black] (1, 1) circle(0.02) node[anchor = south west]{\scriptsize{$\frac{(p, q)}{\lambda_1(K_\sym)}$}};
            \draw[fill = black] (- 1, - 1) circle(0.02) node[right]{\scriptsize{$\frac{(- p, - q)}{\lambda_1(K_\sym)}$}};
            \draw[dashed, fill = black, rotate = 45] (0, 0) -- (0, 0.5) circle(0.02) node[anchor = south east]{\scriptsize{$x$}};
            \draw[dashed, fill = black, rotate = 45] (0.5, 0) -- (0.5, - 0.5) circle(0.02) node[anchor = north west]{\scriptsize{$y$}};
            \draw(0.5, 1.1) node{$K$};
            \draw(0.75, 0.5) node{$Q$};
            \draw[dashed] (- 1, - 1) -- (1, 1);
            \draw[thin, rotate = 45] (- 1.6, - 0.5) -- (- 1.6, 0.5);
            \draw[thin, rotate = 45] (- 1.65, - 0.5) -- (- 1.55, - 0.5);
            \draw[thin, rotate = 45] (- 1.65, 0.5) -- (- 1.55, 0.5);
            \draw[thin, rotate = 45] (- 1.75, 0) node{\rotatebox{- 45}{\scriptsize{$\frac{\phi(q, - p) + \phi(- q, p)}{\sqrt{p^2 + q^2}}$}}};

            \draw[-latex, thick] (- 1.5, 0) -- (1.5, 0) node[below]{$x_1$};
            \draw[-latex, thick] (0, - 1.5) -- (0, 1.5) node[left]{$x_2$};
        \end{tikzpicture}
        
        \caption{The construction of the polygon $Q$ in the proof of \cref{prop:2.torusemb}.}
        \label{fig:2.torusemb}
    \end{figure}

    Thus we find
    \begin{equation*}
        \abs{K} \ge \abs{Q} \ge \frac{1}{2} \rb{\frac{2\sqrt{p^2 + q^2}}{\lambda_1(K_\sym)}} \rb{\frac{w(K)}{\sqrt{p^2 + q^2}}} = \frac{w(K)}{\lambda_1(K_\sym)},
    \end{equation*}
    proving \eqref{eq:2.torusemb}.

    If \eqref{eq:2.torusemb} holds with equality, $K$ coincides (up to a translation) with the polygon $Q$, and $\phi(- q, p) + \phi(q, - p) = w(K)$. 
    Therefore the description of $K$ as in the statement holds.
    \qedhere
    
\end{proof}

We remark that the inequality \eqref{eq:2.torusemb} can be equivalently formulated in terms of the ratio between the first successive minima of the symmetrized body of $K$ and its polar: $\frac{\lambda_1((K_\sym)^\circ)}{\lambda_1(K_\sym)}\leq\frac{\abs{K}}{2}$.
For a general lattice we have the following extension of the previous result.

\begin{cor}

    Let $K \subset \R^2$ be any convex body. Then it holds
    \begin{equation*}
        \abs{K} \ge d(\Lambda) \frac{w_\Lambda(K)}{\lambda_1(K_\sym, \Lambda)}.
    \end{equation*}
    
\end{cor}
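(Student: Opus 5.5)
The plan is to reduce to the standard lattice by a linear change of variables and then quote \cref{prop:2.torusemb}, using the identities of \cref{rem:2.latticetrans}. Write $\Lambda = A_\Lambda \Z^2$ and set $K' \defeq A_\Lambda^{-1} K$. This is again a convex body containing the origin in its interior, so \cref{prop:2.torusemb} applies to it and gives
\begin{equation*}
    \abs{K'} \ge \frac{w(K')}{\lambda_1(K'_\sym)}.
\end{equation*}
It then remains to express each of the three quantities for $K'$ in terms of $K$ and $\Lambda$.

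For the area and the width I will use \cref{rem:2.latticetrans} directly. It gives $\abs{K'} = \abs{K}/d(\Lambda)$ and $w(K') = w_{\Z^2}(A_\Lambda^{-1}K) = w_\Lambda(K)$.

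For the successive minimum, symmetrization commutes with linear maps, since
\begin{equation*}
    (A_\Lambda^{-1}K)_\sym = \tfrac12\bigl(A_\Lambda^{-1}K - A_\Lambda^{-1}K\bigr) = A_\Lambda^{-1} K_\sym .
\end{equation*}
The first identity of \cref{rem:2.latticetrans} then gives $\lambda_1(K'_\sym, \Z^2) = \lambda_1(A_\Lambda^{-1}K_\sym, \Z^2) = \lambda_1(K_\sym, \Lambda)$.

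Substituting these into the inequality for $K'$ gives
\begin{equation*}
    \frac{\abs{K}}{d(\Lambda)} \ge \frac{w_\Lambda(K)}{\lambda_1(K_\sym,\Lambda)},
\end{equation*}
which is the claim after multiplying through by $d(\Lambda)$. No step here is a genuine obstacle. The only point that needs care is checking that the width identity in \cref{rem:2.latticetrans} uses the polar lattice $\Lambda^\circ = A_\Lambda^{-T}\Z^2$ consistently with \cref{defi:2.lattice_width}. One can confirm this by a direct computation: for $z = A_\Lambda^{-T}\zeta$ with $\zeta \in \Z^2$ we have $z\cdot(x-y) = \zeta\cdot(A_\Lambda^{-1}x - A_\Lambda^{-1}y)$, so the two minimizations defining $w_\Lambda(K)$ and $w(K')$ coincide term by term.
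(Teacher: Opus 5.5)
Your proof is correct and is the intended argument: the paper gives no separate proof, presenting the corollary as a direct consequence of \cref{prop:2.torusemb} applied to $A_\Lambda^{-1}K$, combined with the identities $\abs{A_\Lambda^{-1}K} = \abs{K}/d(\Lambda)$, $w(A_\Lambda^{-1}K) = w_\Lambda(K)$ and $\lambda_1(A_\Lambda^{-1}K_\sym) = \lambda_1(K_\sym,\Lambda)$ from \cref{rem:2.latticetrans}. Your direct check of the width identity via $z = A_\Lambda^{-T}\zeta$ is a good confirmation that the polar lattice $\Lambda^\circ$ is used consistently with \cref{defi:2.lattice_width}.
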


\section{Proof of the main result} \label{sec:3}

\subsection{Lamellar configurations on the torus}

In the following, we shall consider lamellar configurations with directions different from the horizontal and vertical ones, 
and for that purpose we compute the length of a line in the torus, defined as the quotient of a line in $\R^2$.
Notice that a line $r \subset \T^2$ has finite length if and only if it is closed, 
meaning that there exist two different points on the line whose difference belongs to $\Z^2$.

\begin{lem} \label{lem:3.linelen}

    Let $r \subset \T^2$ be a line in the flat torus $\T^2$ with normal vector $\nu = (\nu_1, \nu_2) \in \S^1$.
    Then, $r$ has finite length if and only if there exist $p, q \in \Z$ with $\GCD (p, q) = 1$ such that
    \begin{equation*}
        \nu_1 = \frac{p}{\sqrt{p^2 + q^2}}, \qquad \qquad \nu_2 = \frac{q}{\sqrt{p^2 + q^2}}.
    \end{equation*}
    In that case, the length $\ell(r)$ of $r$ is given by
    \begin{equation*}
        \ell(r) = \sqrt{p^2 + q^2}.
    \end{equation*}
    
\end{lem}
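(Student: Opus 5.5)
The plan is to lift everything to $\R^2$ via the projection $\pi:\R^2\to\T^2$. A line $r\subset\T^2$ with unit normal $\nu$ is the image of a line $\tilde r = x_0 + \R\tau$ in $\R^2$, where $\tau = (-\nu_2,\nu_1)$ is a unit tangent vector. The restriction $\pi|_{\tilde r}$ is a group-equivariant map $t\mapsto \pi(x_0+t\tau)$ from $\R$ onto $r$. The key observation is that $r$ is closed (has finite length) exactly when this map is periodic, i.e. when $T\tau\in\Z^2$ for some $T>0$. Indeed, $\pi(x_0+t\tau)=\pi(x_0+s\tau)$ iff $(t-s)\tau\in\Z^2$, which matches the definition of closedness given just before the lemma.

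For the first implication, suppose $T\tau\in\Z^2\setminus\{0\}$ for some $T>0$. Then $\tau$ is a positive multiple of an integer vector, which we write as $(-q,p)$ after dividing by the GCD so that $\GCD(p,q)=1$. Hence $\tau = (-q,p)/\sqrt{p^2+q^2}$ and $\nu=(p,q)/\sqrt{p^2+q^2}$, up to the irrelevant global sign choice of $\nu$. Conversely, if $\nu$ has this form, then $\sqrt{p^2+q^2}\,\tau=(-q,p)\in\Z^2$, so $r$ is closed.

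For the length, the set of periods $\{t\in\R : t\tau\in\Z^2\}$ is a discrete subgroup of $\R$. Since $(-q,p)$ is primitive (coprimality), any $t$ with $t\tau\in\Z^2$ makes $t(-q,p)/\sqrt{p^2+q^2}$ an integer vector parallel to $(-q,p)$. Such a vector is $k(-q,p)$ with $k\in\Z$, because a primitive vector generates all integer points on its line. Therefore the minimal period is $T_0=\sqrt{p^2+q^2}$. The map $[0,T_0)\ni t\mapsto \pi(x_0+t\tau)$ is then injective and onto $r$, and it is an isometry locally since $\pi$ is a local isometry and $|\tau|=1$. It follows that $\ell(r)=\Ha^1(r)=T_0=\sqrt{p^2+q^2}$.

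In the non-closed case there is no nonzero period, so the map $\R\to r$ is injective and $r$ has infinite length. In fact $r$ is dense, by Kronecker, though only infinite length is needed.

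The only step requiring care is the primitivity argument: an integer point on the line $\R(-q,p)$ must be an integer multiple of $(-q,p)$. This follows from Bézout's Lemma. If $t(-q,p)=(a,b)\in\Z^2$, choose $m,n$ with $pm+qn=1$; then $t = t(pm+qn) = bm - an\in\Z$.
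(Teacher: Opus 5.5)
Your proof is correct and follows essentially the paper's route: lift $r$ to $x_0+\R\tau$, observe that $r$ is closed iff $t\tau\in\Z^2$ for some $t>0$, and identify $\ell(r)$ with the minimal such $t$. The only difference is the direction of the primitivity argument. The paper sets $(p,q)\defeq t_0\nu$ from the minimal period $t_0$ and gets $\GCD(p,q)=1$ from minimality. You instead start from the primitive vector $(-q,p)$ and use Bézout to show the minimal period is $\sqrt{p^2+q^2}$, and you also justify the injectivity of $t\mapsto\pi(x_0+t\tau)$ on $[0,T_0)$, which the paper leaves implicit.
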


\begin{proof}

    Up to a translation, we may assume that $r$ passes through the point $(0, 0)$ of the torus, 
    so that $r = \cb{t \tau \in \T^2 : t \in \R}$, where $\tau \defeq (- \nu_2, \nu_1) \in \S^1$ is the vector giving the direction of $r$.
    Then $r$ has finite length if and only if there exists $t > 0$ with $t \tau \in \Z^2$, that is, $t \nu_1, t \nu_2 \in \Z$.
    Let $t_0 \defeq \min \cb{t > 0 : t \tau \in \Z^2}$, and set $p \defeq t_0 \nu_1 \in \Z$, $q \defeq t_0 \nu_2 \in \Z$.
    Notice that $\GCD (p, q) = 1$, otherwise the minimality of $t_0$ would be contradicted.
    As a consequence,
    \begin{equation*}
        \rb{\frac{p}{t_0}, \frac{q}{t_0}} = (\nu_1, \nu_2) \in \S^1, 
    \end{equation*}
    which in turn implies that $t_0 = \sqrt{p^2 + q^2}$.
    The converse implication follows easily.
    Finally, the length of $r$ is $\ell(r) = \abs{t_0 \tau} = t_0 = \sqrt{p^2 + q^2}.$ 
    \qedhere
    
\end{proof}

By the previous lemma it makes sense to give the following definition.

\begin{defi} \label{defi:3.lamconf}

    Given $z \in \Z^2 \setminus \cb{0}$ with $\GCD(z_1, z_2) = 1$, the lamella $L_M^z$ with area $M \in (0, 1)$ and orthogonal direction $z$ is defined as
    \begin{equation*}
        L_M^z \defeq \cb{t z^\perp + s z \in \T^2 : t \in \R, \ 0 \le s \le \frac{M}{\abs{z}^2}}.
    \end{equation*}
    
\end{defi}

The area of $L_M^z$ is $M$, since its height is equal to $\frac{M}{\abs{z}}$, 
and by the previous lemma the base has length $\abs{z}$.
Moreover, given a surface tension $\phi$ associated with a convex body $K$, in view of \cref{lem:3.linelen}, 
the anisotropic perimeter of the lamella $L_M^z$ is simply $P_{\T^2}^\phi(L_M^z) = \phi(z) + \phi(- z)$ (which is independent of its area $M$). 
Therefore, the set $\D(K)$ of minimizing lattice directions for $K$ dictates the optimal orientation of the lamellae, 
and the minimal perimeter coincides with the minimal lattice width $w(K)$ of $K$ (see \cref{defi:2.lattice_width}):
\begin{equation} \label{eq:3.per_lamella}
    \min_{z \in \Z^2 \setminus\cb{0}} P_{\T^2}^\phi (L_M^z) = P_{\T^2}^\phi (L_M^{\bar{z}}) = w(K) \qquad\text{for all } \bar{z} \in \D(K).
\end{equation}

\subsection{The uniformly elliptic case}

In this subsection we prove \cref{thm:main} for the class of uniformly elliptic surface tensions, 
whose Wulff shape is a uniformly convex set with boundary of class $C^2$ (see \cite{Sch13}). 
In this setting we can make use of standard regularity results for minimizers of the anisotropic perimeter.

\begin{defi} \label{defi:2.uniell}

    A surface tension $\phi : \R^2 \to [0, + \infty)$ 
    is said to be \textit{uniformly elliptic with constant $\lambda > 0$} (or simply \textit{$\lambda$-elliptic}) 
    if $\phi\in C^2(\R^2 \setminus \cb{0})$ and
    \begin{equation*}
        \nabla^2 \phi(x) \sb{x^\perp, x^\perp} \ge \lambda \qquad \qquad \text{for any } x \in \S^1. 
    \end{equation*}
    
\end{defi}

We can now prove \cref{thm:main}.

\begin{proof}[Proof of \cref{thm:main} assuming $\phi$ uniformly elliptic]

    We divide the proof into three steps. In the first step we collect some preliminary remarks on the candidate minimizers for \eqref{eq:1.wulffper}. 
    In the second step we show that any minimizer $E$ has boundary of class $C^2$ and its curvature has constant sign. 
    Finally, in the third step we classify the possible solutions of \eqref{eq:1.wulffper} in terms of the sign of the curvature of the boundary.
    Recall that we are given a uniformly elliptic surface tension $\phi$ whose associated Wulff shape is denoted by $K$.
    
    \vspace{5 pt}
    \noindent
    \textit{Step one}:
    we begin with some preliminary observations on the candidate minimizers for problem \eqref{eq:1.wulffper}.
    We define the quantity
    \begin{equation*}
        \overline \mu \defeq \inf \cb{\mu > 0 : (\mu K) \cap (\mu K + z) \ne \emptyset \text{ for some } z \in \Z^2 \setminus \cb{0}},
    \end{equation*}    
    which corresponds to the critical scaling factor for $K$ to embed isometrically in the torus; in other words, for $\mu\le\overline{\mu}$, 
    the interiors of the translated copies $\mu K + z$ for $z\in\Z^2$ are mutually disjoint, 
    so the rescaled Wulff shape $\mu K$ does not self-intersect (or only self-intersects at its boundary for $\mu=\overline{\mu}$) when wrapped around the torus.
    We observe that
    \begin{align*}
        \overline{\mu} &= \inf \cb{\lambda > 0 : \text{ there exist } x, y \in \lambda K \text{ with } x - y \in \Z^2 \setminus \cb{0}} \\
        &= \inf \cb{\lambda > 0 : 2\lambda K_\sym \cap \Z^2 \ne \cb{0}} 
        = \frac{1}{2} \, \lambda_1(K_\sym),
    \end{align*}
    and therefore by \cref{prop:2.torusemb} we have the lower bound
    \begin{equation} \label{prf:main-mubar}
         \overline \mu \ge \frac{w(K)}{2\abs{K}}.
    \end{equation}

    Next, for $M \in (0,1)$ we let
    \begin{equation} \label{prf:main-muM}
        \mu_M \defeq \sqrt{\frac{M}{\abs{K}}}
    \end{equation}
    be the scaling factor such that $\abs{\mu_M K} = M$. 
    By \eqref{eq:2.per-wulff} and \eqref{eq:3.per_lamella} the anisotropic perimeters of the rescaled Wulff shape $\mu_M K$ and of the lamella $L_M^z$, 
    for $z\in\D(K)$, are
    \begin{equation} \label{prf:main-per}
        P^\phi(\mu_M K) = \mu_M P^\phi(K) = 2 \sqrt{M \abs{K}}, \qquad P_{\T^2}^\phi(L_M^{z}) = w(K),
    \end{equation}
    respectively (recall that $P^\phi$ denotes the anisotropic perimeter in $\R^2$). 
    Hence the quantity
    \begin{equation*}
        \alpha(K) \defeq \frac{w(K)^2}{4 \abs{K}}
    \end{equation*}
    corresponds to the critical area for which $P^\phi(\mu_{\alpha(K)} K) = P_{\T^2}^\phi(L_{\alpha(K)}^{z})$. 
    We remark that, by \cref{prop:2.symarea} and \cref{prop:2.asymarea}, if $K$ is symmetric then we always have $\alpha(K) \le \frac{1}{2}$, 
    whereas $\alpha(K)\le\frac{2}{3}$ in the general case of a non-symmetric convex body $K$, where it may happen that $\alpha(K)$ exceeds $\frac{1}{2}$. 
    It is important to observe that, by \eqref{prf:main-mubar},
    \begin{equation} \label{prf:main-mubar2}
        \mu_{\alpha(K)} \le \overline{\mu},
    \end{equation}
    meaning that, for all $M \le \alpha(K)$, the rescaled Wulff shape $\mu_M K$ with area $M$ is embedded in the torus $\T^2$ without self-intersections.

    Finally, we also compute the anisotropic perimeter of the complement of $- K$. Denoting by $\tilde \phi : \R^2 \to [0, + \infty)$ 
    the surface tension given by $\tilde \phi(x) \defeq \phi(- x)$, $x \in \R^2$, whose associated Wulff shape is $W_{\tilde \phi} = - K$, 
    we see that for any set of finite perimeter $F \subset \R^2$ the complement $F^c \defeq \R^2 \setminus F$ is such that $P^{\tilde \phi} (F^c) = P^\phi (F)$. 
    Therefore
    \begin{equation} \label{prf:main-per2}
        P^\phi \big( (- \mu_{1 - M} K)^c \big) = P^{\tilde \phi} \big( - \mu_{1 - M} K \big)
        = \mu_{1 - M} P^{\tilde \phi} (W_{\tilde \phi}) = 2 \sqrt{(1 - M) \abs{K}}. 
    \end{equation}

    \vspace{5 pt}
    \noindent
    \textit{Step two}:
    let $E$ be a minimizer for problem \eqref{eq:1.wulffper}.
    By standard regularity results for planar minimizers of the anisotropic perimeter in the case of uniformly elliptic surface tensions, 
    the boundary of $E$ is of class $C^1$ (see for instance \cite[Theorem~6.4; Theorem~6.18]{ANP02}). 
    Since we shall later exploit the sign of the Euclidean curvature, we next improve the regularity to $C^2$ by a first variation argument.

    In local coordinates, given any point $x_0 \in \partial E$ we can express $\partial E$ in a neighborhood of $x_0$ as the graph of a $C^1$-function: 
    in a suitable coordinate system,
    \begin{equation*}
        \big( \partial E - x_0 \big) \cap (- r, r)^2 = \Big\{ (s, u(s)) : \abs{s} < r \Big\}, \quad
        \big( E - x_0 \big) \cap (- r, r)^2 = \Big\{ (s, t) \in (- r, r)^2 : t < u(s) \Big\},
    \end{equation*}
    for some function $u \in C^1((- r, r))$ with $u(0) = 0$, and $r > 0$ small enough.
    By computing the first variation of the anisotropic perimeter along a variation of the form $u + \varepsilon h$, for $h\in C^1_{c}((-r,r))$ with $\int_{-r}^r h=0$, we find that there exists a constant $H_0 \in \R$ such that $u$ is a weak solution to
    \begin{equation} \label{eq:3.weaksol}
        \big( \partial_1 \phi(- u'(s), 1) \big)' = H_0 \qquad\text{in } (- r,r),
    \end{equation}
    and \cite[Theorem 7.56]{AFP00} ensures that $u \in W^{2, 2}_\loc((- r, r))$.
    As a consequence,
    \begin{equation} \label{prf:main-curv-2}
        u'' = - \frac{H_0}{\partial_{1 1}^2 \phi(- u', 1)} \qquad\text{in }(-r,r),
    \end{equation}
    where $\partial_{1 1}^2 \phi(- u', 1) > 0$ is uniformly bounded away from zero by $\lambda$-ellipticity of $\phi$. 
    This proves that $u$ is of class $C^2$ and, in turn, any minimizer of \eqref{eq:1.wulffper} has boundary of class $C^2$.

    The left-hand side of \eqref{eq:3.weaksol} is the local representation of the \textit{anisotropic mean curvature} of $E$ with respect to the outer normal $\nu_E$, defined as
    \begin{equation*}
        H_E^\phi(x) \defeq \dive_\tau \Big( \nabla \phi \big( \nu_E(x) \big) \Big), \qquad \qquad x \in \partial E,
    \end{equation*}
    with $\dive_\tau$ denoting the tangential divergence on $\partial E$. Therefore
    \begin{equation} \label{prf:main-curv}
        H_E^\phi \equiv H_0 \qquad \text{on } \partial E.
    \end{equation}    
    Furthermore, the euclidean mean curvature $H_E(x) \defeq \dive_\tau \nu_E(x)$ of $\partial E$ with respect to the outer normal is represented locally as
    \begin{equation*}
        H_E(s,u(s)) = -\frac{u''(s)}{(1 + u'(s)^2)^{3/2}} \qquad \text{in } (- r,r).
    \end{equation*}
    Hence from \eqref{prf:main-curv-2} it follows that $H_E$ has constant sign on $\partial E$, which coincides with the sign of $H_0$.
    
    \vspace{5 pt}
    \noindent
    \textit{Step three}:
    We consider a minimizer $E$ for problem \eqref{eq:1.wulffper} and its periodic extension $E_{\R^2}$ to $\R^2$, 
    and we study its geometry in relation to the sign of $H_0$ in \eqref{prf:main-curv}.
    Notice that $E$ is connected as a subset of $\T^2$ 
    (otherwise, translating one connected component so that it touches another would either reduce the perimeter, or create a singularity, contradicting minimality)
    
    \begin{itemize}
        \item 
            If $H_0 = 0$, then $H_E \equiv 0$ and $\partial E_{\R^2}$ is the union of parallel lines.
            Since $E$ is connected, it must be $E = L_M^z$ for some $z \in \Z^2 \setminus \cb{0}$.
            By \eqref{eq:3.per_lamella} we conclude that $z \in \D(K)$ and $P_{\T^2}^\phi(E) = w(K)$.
        
        \item 
            If $H_0 > 0$, fix one of the connected components of $E_{\R^2}$, denoted by $F \subset \R^2$, whose boundary has strictly positive curvature.
            Hence $F$ is strictly convex and bounded, 
            which in turn implies that every connected component of $E_{\R^2}$ is bounded and strictly convex.
            Since $E$ is connected as a subset of $\T^2$, it holds $E_{\R^2} = \bigcup_{z \in \Z^2}(F + z)$ 
            which in turn implies $\abs{F} = \abs{E} = M$. 
            By minimality of $\mu_M K$ for the anisotropic isoperimetric problem \eqref{eq:2.wulff} in $\R^2$ it follows that
            \begin{equation} \label{prf:main-est}
                P_{\T^2}^\phi(E) = P^\phi(F) \ge P^\phi (\mu_M K)
            \end{equation}
            with equality if and only if $F = \mu_M K$ (up to translation).
            
            Consider first the case $M \le \alpha(K) \wedge \frac{1}{2}$.
            As observed in Step~one, as a consequence of the inequality \eqref{prf:main-mubar2} 
            the rescaled Wulff shape $\mu_M K$ is embedded in the torus $\T^2$ without self-intersections, so that we can continue in \eqref{prf:main-est} as
            \begin{equation*}
                 P_{\T^2}^\phi(E) = P^\phi(F) \ge P^\phi (\mu_M K) = P_{\T^2}^\phi (\mu_M K) \ge P_{\T^2}^\phi(E).
            \end{equation*}
            Therefore the inequalities are actually equalities and $E = \mu_M K$.

            Let us show that the complementary case $M > \alpha(K) \wedge \frac{1}{2}$ is incompatible with the condition $H_0 > 0$.
            If $\alpha(K) \le \frac{1}{2}$, we have $M > \alpha(K)$ and hence
            \begin{equation*}
                P_{\T^2}^\phi(E) \xupref{prf:main-est}{\ge} P^\phi (\mu_M K) > P^\phi (\mu_{\alpha(K)} K)
                = \frac{w(K)}{2 \abs{K}} P^\phi (K) = w(K) = P_{\T^2}^\phi (L_M^z)
            \end{equation*}
            for $z \in \D(K)$, which contradicts the minimality of $E$.
            
            If instead $\alpha(K) > 1/2$, we have that $1 - M < 1/2 < M$, and, denoting $\tilde \phi(x) \defeq \phi(- x)$ as in Step~one, we have
            \begin{align*}
                P_{\T^2}^\phi(E)
                & \xupref{prf:main-est}{\ge} P^\phi (\mu_M K)
                \xupref{prf:main-per}{=} 2 \sqrt{M \abs{K}}
                > 2 \sqrt{(1 - M) \abs{K}} \\
                & \xupref{prf:main-per2}{=} P^\phi \bigl( (- \mu_{1 - M} K)^c \bigr)
                = P_{\T^2}^\phi \bigl( (- \mu_{1 - M} K)^c \bigr),
            \end{align*}
            where the last inequality follows since the set $\mu_{1 - M} K$ is embedded in the torus $\T^2$, in view of the inequality \eqref{prf:main-mubar2} 
            and the assumption $\alpha(K) \ge 1/2 > 1 - M$. Since $\abs{(- \mu_{1 - M} K)^c} = M$, this again contradicts the minimality of $E$.

            In conclusion, we showed that, if $H_0>0$, then necessarily $E$ coincides with $\mu_M K$ and $M\leq\alpha(K)\wedge\frac12$.
          
        \item 
            If $H_0 < 0$, we consider the complement $E^c = \T^2 \setminus E$, which has area $\abs{E^c} = 1 - M$.
            Under the reflected surface tension $\tilde \phi(x) = \phi(-x)$, 
            the anisotropic mean curvature of $E^c$ satisfies $H_{E^c}^{\tilde \phi} = - H_E^\phi = - H_0 > 0$.
            We conclude by the previous point that necessarily $1 - M \le \alpha(K) \wedge \frac{1}{2}$, that is, 
            $M \ge 1 - \alpha(K) \wedge \frac{1}{2}$, and $E^c = \mu_{1 - M} W_{\tilde \phi} = - \mu_{1 - M} K$.
    
    \end{itemize}
    In conclusion, the only possible minimizers for \eqref{eq:1.wulffper} are $\mu_M K$ (for $M \le \alpha(K) \wedge \frac{1}{2}$), 
    $(- \mu_{1 - M} K)^c$ (for $M \ge 1 - \alpha(K) \wedge \frac{1}{2}$), and $L^z_M$ for $z \in \D(K)$. 
    By comparing the respective anisotropic perimeters, see \eqref{prf:main-per} and \eqref{prf:main-per2}, we obtain the conclusion.
    \qedhere
   
\end{proof}

\subsection{The general case}

We now provide the full proof of \cref{thm:main} for a general surface tension $\phi$, by approximating $\phi$ with uniformly elliptic surface tensions.

\begin{proof}[Proof of \cref{thm:main}]

    Let $\phi$ be a surface tension with Wulff shape $K$, as in the statement.
    We construct a sequence $\cb{\phi_h}_{h \in \N}$ of uniformly elliptic surface tensions
    such that $\phi \le \phi_h$ for $h \in \N$, and $\lim_{h \to \infty} \phi_h = \phi$ uniformly on $\S^1$.
    
    To this end, we first use the approximation argument in \cite[Theorem~3.3.1]{Sch13} to find a sequence $\{ \tilde{\phi}_h \}_{h \in \N}$ of surface tensions 
    of class $C^\infty (\R^2 \setminus \cb{0})$ such that $\tilde{\phi}_h \to \phi$ uniformly on $\S^1$ as $h \to \infty$ 
    (equivalently, the corresponding Wulff shapes $\tilde{K}_h$ converge to $K$ in the Hausdorff metric). 
    Then, setting $\delta_h \defeq \sup_{x \in \S^1} \abs{\phi(x) - \tilde{\phi}_h(x)} \to 0$, we define
    \begin{equation*}
        \phi_h(x) \defeq \tilde{\phi}_h(x) + \rb{\delta_h + \frac{1}{h}} \abs{x}, \qquad x \in \R^2
    \end{equation*}
    (corresponding to the Wulff shape $K_h = \tilde{K}_h + B_{\delta_h + 1/h}$), so that $\phi_h \in C^\infty (\R^2 \setminus \cb{0})$ is uniformly elliptic for any $h \in \N$,
    $\phi_h \ge \phi$, and $\phi_h \to \phi$ as $h \to \infty$ uniformly on compact sets.
    This gives the desired approximation.

    Now, having fixed $M \in (0, 1)$, let $E_h \subset \T^2$ be a minimizer for
    \begin{equation} \label{eq:3.wulffper_h}
    \min \cb{P_{\T^2}^{\phi_h}(E) : E \subseteq \T^2, \ \abs{E} = M}.
    \end{equation}
    Up to a subsequence, $E_h \to E$ (in the sense that $\abs{E_h \Delta E}\to 0$) for some set $E \subset \T^2$ with $\abs{E} = M$.
    In particular, $E$ is a minimizer in \eqref{eq:1.wulffper} for the surface tension $\phi$: 
    indeed, given any set $F \subset \T^2$ with finite perimeter and $\abs{F} = M$, we have
    \begin{equation*}
        P_{\T^2}^\phi(E) \le \liminf_{h \to \infty} P_{\T^2}^\phi (E_h) 
        \le \liminf_{h \to \infty} P_{\T^2}^{\phi_h}(E_h)
        \le \liminf_{h \to \infty} P_{\T^2}^{\phi_h}(F) = P_{\T^2}^\phi(F).
    \end{equation*}
    Hence any convergent subsequence of minimizers $\cb{E_h}_{h \in \N}$ of \eqref{eq:3.wulffper_h} converges to a minimizer of \eqref{eq:1.wulffper}.
    Moreover, $w(K_h) \to w(K)$ and $\alpha(K_h) \to \alpha(K)$ as $h \to \infty$.
    
    If $M < \alpha(K) \wedge \frac{1}{2}$ then also $M < \alpha(K_h) \wedge \frac{1}{2}$ for $h$ sufficiently large,
    so that by \cref{thm:main} in the uniformly elliptic case, proved in the previous subsection, we necessarily have $E_h = \sqrt{\frac{M}{\abs{K_h}}} K_h$ (up to a translation).
    Hence $\sqrt{\frac{M}{\abs{K}}} K$ is a minimizer for $M < \alpha(K) \wedge \frac{1}{2}$.
    In a similar way we obtain a minimizer in the case $M > 1 - (\alpha(K) \wedge \frac{1}{2})$.

    For the lamellar phase, let $\alpha(K) < M < 1 - \alpha(K)$, so that, as before, $L^{z_h}_M$ for $z_h\in\D(K_h)$ is a minimizer of \eqref{eq:3.wulffper_h} for $h$ large enough. 
    Since $(z_h)_h$ is uniformly bounded, up to subsequences $z_h = \bar{z} \in \Z^2 \setminus \cb{0}$, and it is easily proved that $\bar{z} \in \D(K)$:
    hence $L^{\bar{z}}_M$ is a minimizer for \eqref{eq:1.wulffper} and, in turn, $L^{z}_M$ is also a minimizer for all $z \in \D(K)$.
    
    Finally, the transition cases, when $M$ coincides with one of the transition thresholds, follow by a continuity argument.
    \qedhere
    
\end{proof}

We emphasize that the previous argument only shows that the minimizers listed in \cref{thm:main} can be obtained as limits of minimizers for uniformly elliptic surface tensions. 
It does not imply that \emph{every} minimizer of the limiting problem must be one of these configurations: additional minimizers may exist (including lamellar configurations) 
which are not limits of minimizers of the approximating problems. 
The uniqueness statement in \cref{thm:main} is therefore restricted to uniformly elliptic surface tensions.

\section{Additional remarks and examples} \label{sec:4}

We collect in this section several examples showing how the presence of anisotropy in the surface tension 
might significantly alter the qualitative properties of minimizers, in contrast with the isotropic case.

\begin{rem} \label{rem:3.lamdir}

    Given any lattice direction $(p_0, q_0) \in \Z^2 \setminus \cb{0}$ with $\GCD(p_0, q_0) = 1$, 
    it is possible to construct a symmetric convex body $K$ such that $(p_0, q_0) \in \D(K)$; 
    that is, for the surface tension associated with $K$, the preferred orientation for the lamellae is in the direction orthogonal to $(p_0, q_0)$ (see \eqref{eq:3.per_lamella}).
    
    Indeed, for $\varepsilon > 0$ (to be chosen later) we define the uniformly elliptic, symmetric surface tension
    \begin{equation*}
        \phi(x_1, x_2) = \sqrt{(x_1 q_0 - x_2 p_0)^2 + \varepsilon^2 (x_1 p_0 + x_2 q_0)^2}, \qquad (x_1, x_2) \in \R^2,
    \end{equation*}
    whose associated Wulff shape is an ellipse rotated by an angle depending on $(p_0,q_0)$, see \cref{fig:4.lamconf}.
    We have $\phi(p_0, q_0) = \varepsilon (p_0^2 + q_0^2)$.
    For any $(p, q) \in \Z^2 \setminus \cb{0}$ with $\GCD(p, q) = 1$ and $(p, q) \ne (p_0, q_0)$, it holds $p q_0 \ne q p_0$ and
    \begin{equation*}
        \phi(p, q) = \sqrt{(p q_0 - q p_0)^2 + \varepsilon^2 (p p_0 + q q_0)^2} \ge \abs{p q_0 - q p_0} \ge 1.
    \end{equation*}
    By choosing $\varepsilon < \frac{1}{p_0^2 + q_0^2}$, recalling that $\phi$ is symmetric, 
    we therefore obtain that $\phi(p_0, q_0) < \phi(p, q)$ for all $(p, q) \in \Z^2 \setminus \cb{0, \pm (p_0, q_0)}$, that is,
    $(p_0, q_0)\in\D(K)$, where $K$ is the Wulff shape of $\phi$. 
    By \cref{thm:main} the lamella $L_M^{(p_0, q_0)}$ is a global minimizer of \eqref{eq:1.wulffper} for $M$ close to $1/2$.

\end{rem}

\begin{figure}[ht]
    \centering
    \begin{tikzpicture} [scale = 1.7]
        \draw[fill = darkgreen, rotate = 10] (0, 0) ellipse (3 and 0.3);
        \draw(0, 0) node{$K$};
    \end{tikzpicture}
    \caption{The Wulff shape of the surface tension of \cref{rem:3.lamdir} is, up to a scaling factor, 
    a very eccentric ellipse rotated by an angle depending on $(p_0, q_0)$.}
    \label{fig:4.lamconf}
\end{figure}

\begin{rem} \label{rem:3.lamskip}

    For a non-symmetric convex body $K$, the value $\alpha(K)$ in \eqref{eq:1.alphaK} might exceed $\frac12$ and in this case lamellar configurations are \emph{never} global minimizers; the solution to \eqref{eq:1.wulffper} transitions directly from the ``droplet phase'' to the ``bubble phase'' at $M=\frac12$, skipping the lamellar phase.
    
    As an example, consider the surface tension $\phi : \R^2 \to [0, + \infty)$ defined as
    \begin{equation*}
        \phi(x_1, x_2) \defeq \max \cb{x_1, x_2, - x_1 - x_2}, \qquad \qquad (x_1, x_2) \in \R^2,
    \end{equation*}
    whose associated Wulff shape is the triangle $T$ achieving equality in \eqref{eq:2.asymarea}: in this case the quantity $\alpha(K)$ attains its possible maximum value $\frac{2}{3}$.
    
\end{rem}

\begin{rem} \label{rem:3.laminf}

    For surface tensions which do not satisfy the uniform ellipticity condition,
    there might be infinitely many minimizers of \eqref{eq:1.wulffper} for the same value of the area constraint $M$.
    Indeed, consider the symmetric surface tension $\phi : \R^2 \to [0, + \infty)$ defined as 
    \begin{equation*}
        \phi(x_1, x_2) \defeq \max \cb{\abs{x_1}, \eta \abs{x_2}}, \qquad (x_1, x_2) \in \R^2,
    \end{equation*}
    with $\eta \in (0, 1]$.
    Denoting by $K$ the Wulff shape of $\phi$, we see that $e_2 \in \D(K)$ and correspondingly $w(K) = 2 \eta$, so that the horizontal lamella $L_M^{e_2}$
    is a minimizer of $\eqref{eq:1.wulffper}$ for $M \in \sb{\frac{\eta}{2}, 1 - \frac{\eta}{2}}$.
    Taking the upper half of the Wulff shape, which is a triangle of vertices $(\pm 1, 0)$ and $(0, \eta)$, 
    and denoting by $T_h$ the same triangle rescaled so that its base has length $1/h$, one can
    attach $h$ copies of $T_h$ to the upper side of $L_M^{e_2}$ (see \cref{fig:4.laminf}).
    Because the two oblique sides of each triangular tooth are parallel to the facets of the Wulff shape, 
    the anisotropic perimeter of these two sides matches that of the removed horizontal portion, and thus the total perimeter remains unchanged.
    By a vertical translation of the lower side of $L_M^{e_2}$ one can restore the area constraint.
    In this way we construct infinitely many minimizers for the same value of $M$.
    
    Notice that such minimizers cannot be obtained as a limit of minimizers 
    for the approximating sequence of uniformly elliptic surface tensions $\cb{\phi_h}_{h \in \N}$ described in the proof of \cref{thm:main}.
    
\end{rem}

\begin{figure}[ht]
    \centering
    \begin{tikzpicture} [scale = 2]
        \draw[fill = darkgreen] (- 1, 0) -- (0, 0.5) -- (1, 0) -- (0, - 0.5) -- cycle;
        \draw(0, 0) node[anchor = south west]{$K$};
        \draw[thick, -latex](0, - 0.75) -- (0, 0.75) node[left]{$x_2$};
        \draw[thick, -latex](- 1.25, 0) -- (1.25, 0) node[above]{$x_1$};
        \draw(0, - 0.7) node{\phantom{idk}};
    \end{tikzpicture}
    \hspace{0.8 cm}
    \begin{tikzpicture} [scale = 1.7]
        \fill[darkgreen!50] (- 1, - 1) -- (- 1, 1) -- (1, 1) -- (1, - 1) -- cycle;
        \fill[darkgreen] (- 1, - 0.4) -- (- 1, 0.3) -- (- 0.5, 0.55) -- (0, 0.3) -- (0.5, 0.55) -- (1, 0.3) -- (1, - 0.4) -- cycle;
        \draw (- 1, 0.3) -- (- 0.5, 0.55) -- (0, 0.3) -- (0.5, 0.55) -- (1, 0.3);
        \draw (- 1, - 0.4) -- (1, - 0.4);
        \draw[dashed] (- 1, 0.3) -- (1, 0.3);
        \draw (- 1, - 1) -- (- 1, 1) -- (1, 1) -- (1, - 1) -- cycle;
        \draw[thick, ->](- 1, - 0.00001) -- (- 1, 0.00001);
        \draw[thick, ->](1, - 0.00001) -- (1, 0.00001);
        \draw[thick, ->](- 0.00001, - 1) -- (0.00001, - 1);
        \draw[thick, ->](- 0.00001, 1) -- (0.00001, 1);
        \draw(1.3, 0.8) node{$\T^2$};
    \end{tikzpicture}
    \hspace{0.8 cm}
    \begin{tikzpicture} [scale = 1.7]
        \fill[darkgreen!50] (- 1, - 1) -- (- 1, 1) -- (1, 1) -- (1, - 1) -- cycle;
        \fill[darkgreen] (- 1, - 0.4) -- (- 1, 0.35) -- (- 0.75, 0.475) -- (- 0.5, 0.35) -- (- 0.25, 0.475) -- (0, 0.35) -- (0.25, 0.475) -- 
        (0.5, 0.35) -- (0.75, 0.475) -- (1, 0.35) -- (1, - 0.4) -- cycle;
        \draw (- 1, 0.35) -- (- 0.75, 0.475) -- (- 0.5, 0.35) -- (- 0.25, 0.475) -- (0, 0.35) -- (0.25, 0.475) -- (0.5, 0.35) -- (0.75, 0.475) -- (1, 0.35);
        \draw (- 1, - 0.4) -- (1, - 0.4);
        \draw[dashed] (- 1, 0.35) -- (1, 0.35);
        \draw (- 1, - 1) -- (- 1, 1) -- (1, 1) -- (1, - 1) -- cycle;
        \draw[thick, ->](- 1, - 0.00001) -- (- 1, 0.00001);
        \draw[thick, ->](1, - 0.00001) -- (1, 0.00001);
        \draw[thick, ->](- 0.00001, - 1) -- (0.00001, - 1);
        \draw[thick, ->](- 0.00001, 1) -- (0.00001, 1);
        \draw(1.3, 0.8) node{$\T^2$};
    \end{tikzpicture}
    \caption{The Wulff shape of the surface tension described in \cref{rem:3.laminf}, and two minimizers of \eqref{eq:1.wulffper} for $h = 2$ and $h = 4$.}
    \label{fig:4.laminf}
\end{figure}

By the same construction as in the \cref{rem:3.laminf}, one proves the following non-uniqueness result.

\begin{prop} \label{prop:uniqueness}

    Let $K$ be a convex body with the following property: 
    there exist $z_0 \in \D(K)$ and linearly independent directions $\nu_1, \nu_2 \in \S^1$ such that:
    \begin{enumerate}
        \item 
            $z_0$ belongs to the interior of the convex cone generated by $\nu_1$ and $\nu_2$,
        \item 
            $L_K(\nu_1) \cap L_K(\nu_2) = \cb{P}$ with $P \in \partial K$, 
            where $L_K(\nu_i) = \cb{x : x \cdot \nu_i = \phi(\nu_i)}$, $i = 1, 2$, is the supporting line in direction $\nu_i$.
            
    \end{enumerate}
    Then there are infinitely many minimizers for the values of $M$ for which $L_M^{z_0}$ is a minimizer. 

\end{prop}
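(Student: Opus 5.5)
We follow the construction of \cref{rem:3.laminf}: starting from the lamella $L_M^{z_0}$, we replace part of its boundary line by a periodic sawtooth whose two sides have outer normals $\nu_1$ and $\nu_2$, and show that the anisotropic perimeter does not change. The lamella is a minimizer with $P_{\T^2}^\phi(L_M^{z_0}) = w(K)$ by \eqref{eq:3.per_lamella}. Hence any set of area $M$ with the same perimeter is again a minimizer, and it suffices to produce infinitely many such sets.

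\textbf{Key identity.} Write $\nu_0 = z_0/\abs{z_0}$, the outer normal of the upper boundary line $\ell$ of $L_M^{z_0}$. By assumption (i) we have $\nu_0 = a_1\nu_1 + a_2\nu_2$ with $a_1, a_2 > 0$. Assumption (ii) gives a point $P\in\partial K$ with $P\cdot\nu_i = \phi(\nu_i)$ for $i=1,2$. The support function is linear on the normal cone of $K$ at $P$, and that cone contains $\nu_1,\nu_2$ and hence $\nu_0$. Therefore
\begin{equation*}
\phi(\nu_0) = P\cdot\nu_0 = a_1\phi(\nu_1) + a_2\phi(\nu_2).
\end{equation*}

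\textbf{Tooth construction.} Take a segment $S\subset\ell$ of length $s$ and build a triangle $T$ over $S$, lying outside the lamella. Its two other sides $\sigma_1,\sigma_2$ have outer normals $\nu_1,\nu_2$; this is possible because $\nu_0$ lies strictly inside the cone, so the two sides close up on the outer side. By the divergence theorem applied to the constant field, $\Ha^1(\sigma_1)\nu_1 + \Ha^1(\sigma_2)\nu_2 = s\,\nu_0$, so $\Ha^1(\sigma_i) = s\,a_i$. The key identity then gives
\begin{equation*}
\phi(\nu_1)\Ha^1(\sigma_1) + \phi(\nu_2)\Ha^1(\sigma_2) = s\,\phi(\nu_0).
\end{equation*}
So replacing $S$ by $\sigma_1\cup\sigma_2$ leaves the anisotropic perimeter unchanged.

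\textbf{Family of minimizers.} Divide the closed line $\ell$, of length $\abs{z_0}$ by \cref{lem:3.linelen}, into $h$ equal segments and attach a tooth of base $\abs{z_0}/h$ to each. Then move the lower boundary line of the lamella in direction $z_0$ to compensate the added area $c\abs{z_0}^2/h$, where $c$ depends only on $\nu_1,\nu_2$. This move does not change the perimeter of the flat part. For $h$ large the teeth have small height, so they do not overlap the other boundary line or their own periodic translates, and the resulting set $E_h$ is embedded with $\abs{E_h} = M$ and $P_{\T^2}^\phi(E_h) = w(K)$. We also need the lamella to have room for the shift; in any case this holds for $h$ large, provided $0<M<1$. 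The sets $E_h$ are pairwise distinct up to translation, since their boundaries have different numbers of corners. Alternatively, for a single $h$ one may vary the tooth sizes continuously.

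\textbf{Main obstacle.} The delicate points are the measure-theoretic justification of the linearity of $\phi$ on the cone spanned by $\nu_1,\nu_2$, and the verification that the modified set stays admissible in the torus, namely that no self-intersection occurs and the area adjustment stays inside the strip. Both become harmless for large $h$, and the first reduces to the elementary support-line fact above.
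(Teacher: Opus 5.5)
Your proposal is correct and takes the same route as the paper, which proves the proposition only by pointing to the sawtooth construction of \cref{rem:3.laminf}: triangular teeth are attached to the boundary line of $L_M^{z_0}$, and the area constraint is restored by translating the other boundary line. Your identity $\phi(\nu_0)=a_1\phi(\nu_1)+a_2\phi(\nu_2)$, obtained from the linearity of $\phi$ on the normal cone of $K$ at $P$, is the precise version of the paper's ``sides parallel to the facets'' argument, and your divergence-theorem computation of the tooth side lengths is exactly what makes the teeth cost-free.
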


\begin{rem} \label{rem:3.square}

    For any given anisotropy, there are at most four distinct preferred orientations for the optimal lamellae, 
    that is, the set $\D(K)$ contains at most four different pairs of opposite points (see \cite[Theorem 1.1]{DMN}).
    This maximum of four directions can be achieved: consider the square $K = \conv\cb{\pm(1,0),\pm(0,1)}$ (corresponding to the surface tension in \cref{rem:3.laminf} with $\eta = 1$). 
    Indeed $\D(K) = \cb{\pm(1,0), \pm(0,1), \pm(1,1), \pm(1,-1)}$, that is, there are exactly four different preferred directions for the optimizing lamellae, 
    which are global minimizers only for $M = \frac{1}{2}$ (as $\alpha(K) = \frac{1}{2}$).

\end{rem}

We conclude this section by discussing the case of a general lattice $\Lambda = A_\Lambda \Z^2$, for $A_\Lambda \in \GL(\R^2)$. 
We consider the corresponding torus $\T_\Lambda \defeq \R^2 / \Lambda$.
The linear transformation $A_\Lambda$ acts as a bijection between $\T^2$ and $\T_\Lambda$, so that, 
if $E \subset \T^2$ is a set of finite perimeter, then so is $A_\Lambda E \subset \T_\Lambda$, and for any $y \in \reb (A_\Lambda E)$ it holds
$\nu_{A_\Lambda E}(y) = \frac{A_\Lambda^{- T} \nu_E(A_\Lambda^{- 1} y)}{\abs{ A_\Lambda^{- T} \nu_E(A_\Lambda^{- 1} y)}}$.

We then generalize problem \eqref{eq:1.wulffper} to the case of the torus $\T_\Lambda$:
\begin{equation} \label{eq:5.wulffper_latt}
    \min \cb{P_{\T_\Lambda}^\phi(E) : E \subseteq \T_\Lambda, \ \abs{E} = M}, \qquad \qquad M \in (0, d(\Lambda)),
\end{equation}
where $P_{\T_\Lambda}^\phi (E)$ is the anisotropic perimeter of $E$ on $\T_\Lambda$ corresponding to a surface tension $\phi$, with associated Wulff shape $K$.
By a change of variables \cite[Proposition~17.1]{Mag12}
\begin{equation} \label{eq:5.per_latt}
    P_{\T_\Lambda}^\phi (E) = d(\Lambda) \int_{\reb (A_\Lambda^{- 1} E)} \phi \big( A_\Lambda^{- T} \nu_{A_\Lambda^{- 1} E}(x) \big) \de \Ha^1(x) 
    = P_{\T^2}^{\phi_\Lambda} (A_\Lambda^{- 1} E),
\end{equation}
where $\phi_\Lambda : \R^2 \to [0, + \infty)$ is the surface tension defined as $\phi_\Lambda(\nu) \defeq d(\Lambda) \phi(A_\Lambda^{- T} \nu)$, $\nu \in \R^2$.

We observe that lamellar configurations on $\T_\Lambda$ are images of lamellar configurations on $\T^2$ through the map $A_\Lambda$:
given $z \in \Z^2 \setminus \cb{0}$ with $\GCD(z_1, z_2) = 1$, the set $A_\Lambda L_M^{z}$ is a lamellar configuration on $T_\Lambda$
whose normal direction is $A_\Lambda^{- T} z \in \Lambda^\circ \setminus \cb{0}$, and area equal to $d(\Lambda) M$.
As a consequence, the lamella $L_{\Lambda, M}^\nu \subset \T_\Lambda$ with area $M \in (0, d(\Lambda))$ and orthogonal direction $\nu \in \Lambda^\circ \setminus \cb{0}$ is defined as
\begin{equation*}
    L_{\Lambda, M}^\nu \defeq  A_\Lambda L_{M/d(\Lambda)}^{A_\Lambda^T \nu}.
\end{equation*}
Moreover, by \eqref{eq:5.per_latt} we have that
\begin{equation*}
    P_{\T_\Lambda}^\phi (L_{\Lambda, M}^\nu) = P_{\T^2}^{\phi_\Lambda} (A^{- 1} L_{\Lambda, M}^\nu) = P_{\T^2}^{\phi_\Lambda} \rb{L_{M/d(\Lambda)}^{A_\Lambda^T \nu}}
    = \phi_\Lambda \big( A_\Lambda^T \nu \big) + \phi_\Lambda \big( - A_\Lambda^T \nu \big) =  d(\Lambda) \big( \phi(\nu) + \phi(- \nu) \big).
\end{equation*}
As a consequence, 
\begin{equation*}
    \min_{\nu \in \Lambda^\circ \setminus \cb{0}} P_{\T_\Lambda}^\phi (L_{\Lambda,M}^\nu) = P_{\T_\Lambda}^\phi (L_{\Lambda,M}^{\bar \nu}) 
    = d(\Lambda) w_\Lambda(K) \quad \text{for all }\bar{\nu} \in \D_\Lambda(K).
\end{equation*}
Denoting by $\alpha_\Lambda(K) \defeq \frac{d(\Lambda)^2 w_\Lambda(K)^2}{4 \abs{K}}$ we deduce from \cref{thm:main} the following result.

\begin{thm} \label{thm:5.wulffminell_latt}

    Let $\phi : \R^2 \to [0, + \infty)$ be a surface tension associated with a convex body $K \subset \R^2$,
    and let $\Lambda$ be a lattice.
    Then, depending on the area constraint $M \in (0, d(\Lambda))$, the following sets solve the minimum problem \eqref{eq:5.wulffper_latt}:
    \begin{equation*}
        \begin{dcases}
            \sqrt{\tfrac{M}{\abs{K}}} K \qquad & \text{ if } \quad 0 < M \le \alpha_\Lambda(K) \wedge \tfrac{d(\Lambda)}{2}, \\[1ex]
            L_{\Lambda,M}^\nu \quad \text{ for } \nu \in \D_\Lambda(K) \qquad 
            & \text{ if } \quad \alpha_\Lambda(K) \le M \le d(\Lambda) - \alpha_\Lambda(K), \\[1ex]
            \rb{- \sqrt{\tfrac{d(\Lambda) - M}{\abs{K}}} K}^c \qquad & \text{ if } \quad d(\Lambda) - \Bigl(\alpha_\Lambda(K) \wedge \tfrac{d(\Lambda)}{2}\Bigr) \le M < d(\Lambda).
        \end{dcases}
    \end{equation*}
    Furthermore, if $\phi$ is uniformly elliptic, then the previous sets are the only possible minimizers, up to translations and Lebesgue-negligible sets.
    Finally, if $\phi$ is symmetric, then the lamella is always a minimizer for some value of $M$.
    
\end{thm}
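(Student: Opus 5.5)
The plan is to reduce \cref{thm:5.wulffminell_latt} to \cref{thm:main} via the change of variables \eqref{eq:5.per_latt}. The map $E \mapsto A_\Lambda^{-1}E$ is a bijection between sets of finite perimeter in $\T_\Lambda$ and in $\T^2$. It rescales areas by $1/d(\Lambda)$, and it turns $P^\phi_{\T_\Lambda}(E)$ into $P^{\phi_\Lambda}_{\T^2}(A_\Lambda^{-1}E)$. Hence $E$ minimizes \eqref{eq:5.wulffper_latt} with area $M$ if and only if $A_\Lambda^{-1}E$ minimizes \eqref{eq:1.wulffper} for the surface tension $\phi_\Lambda$ with area $M' \defeq M/d(\Lambda) \in (0,1)$. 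The first step is to check that $\phi_\Lambda(\nu) = d(\Lambda)\phi(A_\Lambda^{-T}\nu)$ is indeed a surface tension, which is clear since it is the composition of a surface tension with an invertible linear map. I also record that uniform ellipticity of $\phi$ transfers to $\phi_\Lambda$: it is $C^2$ away from the origin, and its Hessian is a congruent transformation of that of $\phi$. The constant changes, but positivity on tangential directions is preserved, which is all that \cref{thm:main} needs.

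Next I identify the Wulff shape of $\phi_\Lambda$. Since $\phi_\Lambda(\nu) = \sup_{x\in K} d(\Lambda)\, x\cdot A_\Lambda^{-T}\nu = \sup_{x \in K} d(\Lambda) A_\Lambda^{-1}x\cdot \nu$, this Wulff shape is $K_\Lambda \defeq d(\Lambda) A_\Lambda^{-1}K$. Its area is $\abs{K_\Lambda} = d(\Lambda)^2\abs{K}/d(\Lambda) = d(\Lambda)\abs{K}$. By \cref{rem:2.latticetrans}, $w(K_\Lambda) = d(\Lambda) w(A_\Lambda^{-1}K) = d(\Lambda) w_\Lambda(K)$. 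From this I compute $\alpha(K_\Lambda) = d(\Lambda)^2 w_\Lambda(K)^2/(4 d(\Lambda)\abs{K}) = \alpha_\Lambda(K)/d(\Lambda)$. Similarly, $z \in \D(K_\Lambda)$ if and only if $A_\Lambda^{-T}z \in \D_\Lambda(K)$, since $\phi_{K_\Lambda}(z) = d(\Lambda)\phi_K(A_\Lambda^{-T}z)$ and $A_\Lambda^{-T}$ maps $\Z^2$ onto $\Lambda^\circ$.

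The last step is to translate the three regimes. Applying \cref{thm:main} to $\phi_\Lambda$ and $M'$, the thresholds become $M' \le \alpha(K_\Lambda)\wedge\frac12$, which is $M \le \alpha_\Lambda(K)\wedge \frac{d(\Lambda)}{2}$, and analogously for the other two. The candidate minimizers are mapped back by $A_\Lambda$ as follows.
\begin{itemize}
\item The droplet $A_\Lambda\sqrt{M'/\abs{K_\Lambda}}\,K_\Lambda = \sqrt{M/(d(\Lambda)^2\abs{K})}\,d(\Lambda)K = \sqrt{M/\abs{K}}\,K$.
\item The lamella $A_\Lambda L^{z}_{M'}$ with $z \in \D(K_\Lambda)$, which by definition is $L^{\nu}_{\Lambda,M}$ with $\nu = A_\Lambda^{-T}z \in \D_\Lambda(K)$.
\item The bubble, which is handled analogously, with $d(\Lambda)-M$ in place of $1-M$.
\end{itemize}
Uniqueness in the elliptic case transfers through the bijection. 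The statement for symmetric $\phi$ follows since $K_\Lambda$ is then symmetric, so $\alpha(K_\Lambda)\le \frac12$.

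There is no real obstacle. The only care needed is the bookkeeping of the powers of $d(\Lambda)$ in $K_\Lambda$ and $\alpha_\Lambda$, and checking the correspondence $\D(K_\Lambda) \leftrightarrow \D_\Lambda(K)$ with the right transposes.
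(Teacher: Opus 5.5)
Your proposal is correct and follows the paper's proof. Both pull the problem back to $\T^2$ via $A_\Lambda^{-1}$ using \eqref{eq:5.per_latt}, identify $W_{\phi_\Lambda}=d(\Lambda)A_\Lambda^{-1}K$ with $w(W_{\phi_\Lambda})=d(\Lambda)w_\Lambda(K)$, and invoke \cref{thm:main}; your explicit bookkeeping ($\alpha(W_{\phi_\Lambda})=\alpha_\Lambda(K)/d(\Lambda)$, the correspondence $\D(W_{\phi_\Lambda})\leftrightarrow\D_\Lambda(K)$, the transfer of uniform ellipticity, and mapping each minimizer back by $A_\Lambda$) fills in details the paper leaves implicit.
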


\begin{proof}

By the previous observations, the problem \eqref{eq:5.wulffper_latt} is equivalent to the problem \eqref{eq:1.wulffper} in the standard lattice $\Z^2$ for the surface tension $\phi_\Lambda$: 
$E$ is a minimizer for \eqref{eq:5.wulffper_latt} if and only if $F \defeq A_\Lambda^{-1} E$ solves
\begin{equation} \label{eq:5.proof_1}
    \min \cb{P_{\T^2}^{\phi_\Lambda}(F) : F \subseteq \T^2, \ \abs{F} = \frac{M}{d(\Lambda)}} .
\end{equation}
By observing that the Wulff shape associated with the surface tension $\phi_\Lambda$ is $W_{\phi_\Lambda} = d(\Lambda) A_\Lambda^{- 1} W_\phi$,
and that $w(W_{\phi_\Lambda}) = d(\Lambda)w_{\Lambda}(W_\phi)$, we deduce the conclusion from \cref{thm:main}.
\qedhere
    
\end{proof}

\section{An application to the anisotropic Ohta-Kawasaki functional} \label{sec:5}

As an application of \cref{thm:main}, we briefly discuss the global minimality of lamellar configurations for an anisotropic version of the Ohta-Kawasaki energy, 
extending a recent result by the second author \cite{Fio26}. 
For $E\subset\T^2$ we let
\begin{equation} \label{eq:5.anOK}
    J_\gamma^\phi(E) = P_{\T^2}^\phi(E) + \gamma \int_{\T^2} \rb{ \int_{\T^2} G(x, y) u_E(x) u_E(y) \de x} \de y,
\end{equation}
where $\gamma \ge 0$ is a fixed parameter, $u_E \defeq \1_E - \1_{E^c}$, and $G$ is the Green's function on the torus for $- \Delta$,
and we consider the corresponding minimum problem
\begin{equation} \label{eq:5.anOKprob}
    \inf \Big\{ J_\gamma^\phi(E) : E \subset \T^2, \ \abs{E} = M \Big\}, \qquad M \in (0, 1).
\end{equation}
It is well known that the presence of the nonlocal term in the energy \eqref{eq:5.anOK} affects the geometry of optimal configurations, as large values of $\gamma$ favor oscillations: 
minimizers are expected to be periodic on an intermediate length scale determined by $\gamma$. The following criterion, based on ideas first introduced in \cite{AFM13} for the isotropic case,
establishes that, whenever the horizontal (or vertical) lamella is the unique minimizer of the periodic Wulff problem, then it also solves \eqref{eq:5.anOKprob} for small values of $\gamma$.

\begin{prop}[{\cite[Proposition~7.1]{Fio26}}] \label{prop:5.stillmin}

    Let the surface tension $\phi$ be either uniformly elliptic.
    If the horizontal lamella $L_M^{e_2}$ of area $M \in (0, 1)$ 
    is the unique (up to translations) global minimizer of the periodic Wulff problem \eqref{eq:1.wulffper}, 
    then it is also the unique global minimizer of $J_\gamma^\phi$, provided $\gamma > 0$ is sufficiently small.
    
\end{prop}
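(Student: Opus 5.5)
The plan is a contradiction and compactness argument in the spirit of \cite{AFM13}. We combine the uniqueness hypothesis for the periodic Wulff problem \eqref{eq:1.wulffper} with a quantitative local minimality of the lamella, which will be uniform for small $\gamma$.

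\textbf{Compactness.} Suppose the claim fails. Then there are $\gamma_k \to 0$ and minimizers $E_k$ of \eqref{eq:5.anOKprob} with $\gamma=\gamma_k$, $\abs{E_k}=M$, and no $E_k$ is a translate of $L_M^{e_2}$. Comparing with the lamella gives $J_{\gamma_k}^\phi(E_k)\le J_{\gamma_k}^\phi(L_M^{e_2})$. The nonlocal term is nonnegative and bounded uniformly on $\T^2$, so the perimeters $P_{\T^2}^\phi(E_k)$ are bounded. Up to translations and subsequences, $E_k\to E$ in $L^1$. By lower semicontinuity, $E$ minimizes \eqref{eq:1.wulffper}, so by the uniqueness assumption $E=L_M^{e_2}$ after translating.

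\textbf{Regularity upgrade.} The nonlocal term is Lipschitz with respect to $L^1$: $\abs{NL(F)-NL(G)}\le C\abs{F\Delta G}$, since the potentials $\int G(\cdot,y)u_F(y)\,dy$ are bounded in $W^{2,p}$. Together with a standard device for removing the volume constraint, this makes each $E_k$ a $(\Lambda,r_0)$-quasi-minimizer of $P^\phi$ with constants independent of $k$. For uniformly elliptic $\phi$, the regularity theory for quasi-minimizers (e.g. \cite{ANP02}) gives uniform density estimates and uniform $C^{1,\beta}$ bounds. Hence $L^1$ convergence improves to $C^{1}$ convergence of the boundaries. For $k$ large, after a vertical translation, $\partial E_k$ consists of two graphs: $x_2=\ell_2+u_k(x_1)$ on top and $x_2=\ell_1+w_k(x_1)$ on the bottom, with $u_k,w_k$ $1$-periodic and $u_k,w_k\to 0$ in $C^1$. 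I expect this step to be the main technical obstacle, mainly in checking that all constants are uniform in $k$.

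\textbf{Quantitative expansion.} Expanding $\phi(-u',1)$ to second order, periodicity kills the linear term $\partial_1\phi(0,1)\int u'$. Uniform ellipticity of $\phi$ then gives
\[
P_{\T^2}^\phi(E_k)-P_{\T^2}^\phi(L_M^{e_2})\ \ge\ c\int_0^1\bigl(\abs{u_k'}^2+\abs{w_k'}^2\bigr)
\]
for some $c>0$.

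For the nonlocal term we expand around $L=L_M^{e_2}$. The first variation is $2\int v_L\,(u_{E_k}-u_L)$, where $v_L=v_L(x_2)$ solves $-v_L''=u_L-\overline{u_L}$. This potential is symmetric about the midline of the lamella, so $v_L(\ell_1)=v_L(\ell_2)$. The linear term equals $2v_L(\ell_2)\bigl(\int u_k-\int w_k\bigr)$ up to errors of order $\int (u_k^2+w_k^2)$, and the area constraint gives $\int u_k=\int w_k$, so this term vanishes. The quadratic part is also bounded by $C\int(u_k^2+w_k^2)$, so the nonlocal term changes by at most $C\gamma_k\int(u_k^2+w_k^2)$.

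\textbf{Conclusion.} We use the vertical translation freedom to normalize $\int u_k=0$; the area constraint then forces $\int w_k=0$ as well. Poincar\'e's inequality gives $\int(u_k^2+w_k^2)\le C\int(\abs{u_k'}^2+\abs{w_k'}^2)$, hence
\[
J_{\gamma_k}^\phi(E_k)\ \ge\ J_{\gamma_k}^\phi(L)+(c-C\gamma_k)\int_0^1\bigl(\abs{u_k'}^2+\abs{w_k'}^2\bigr).
\]
For $k$ large, minimality of $E_k$ forces $u_k'=w_k'=0$, so $E_k$ is a translate of $L_M^{e_2}$. This contradicts the choice of $E_k$ and proves both minimality and uniqueness for small $\gamma$.
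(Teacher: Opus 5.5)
Your argument is correct, but it takes a different route from the one behind this proposition. The paper gives no proof and cites \cite{Fio26}. As the remarks at the end of \cref{sec:5} indicate, that proof uses the second-variation method of \cite{AFM13}, in three stages:
\begin{enumerate}
\item show that the second variation of $J_\gamma^\phi$ at the flat lamella is uniformly positive for small $\gamma$;
\item upgrade this strict stability to $L^1$-local minimality in a neighborhood independent of $\gamma$, which is the technical core and passes through $W^{2,p}$-local minimality and a penalization argument producing uniformly regular $\Lambda$-minimizers;
\item conclude with the same compactness as $\gamma\to0$ that you use.
\end{enumerate}

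You bypass the second stage by exploiting two facts. First, your competitors $E_k$ are themselves minimizers, so their regularity is automatic and no penalization is needed. Second, the critical set is flat. On graphs the perimeter $\int_0^1\phi(-u',1)\,dx_1$ is then a convex function of $u'$, uniformly convex on bounded gradients, and its linear part vanishes by periodicity. So $C^1$-closeness together with Taylor expansion and Poincaré replaces both the second-variation computation and the $W^{2,p}$ upgrade.

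What each route buys:
\begin{itemize}
\item Your route is shorter. Nothing in it singles out the horizontal direction: for $L_M^z$ one takes graphs over closed geodesics of length $\abs{z}$, and the potential of $u_L$ depends only on $x\cdot z$ and is symmetric about the midline. It therefore appears to extend to slanted lamellae, which the paper leaves open.
\item The second-variation route gives more: isolated $L^1$-local minimality of $L_M^{e_2}$ against arbitrary competitors. It is also a method that applies to curved strictly stable critical sets.
\end{itemize}

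One assertion of yours needs a line of proof. For $\delta=u_{E_k}-u_L$ you claim
\[
\int_{\T^2}\int_{\T^2}G(x,y)\,\delta(x)\,\delta(y)\,dx\,dy\le C\int_0^1(u_k^2+w_k^2).
\]
This does not follow from bounds on $\|\delta\|_{L^1}$ and $\|\delta\|_{L^\infty}$ alone. The logarithmic singularity of $G$ would cost a factor $\log(1/\|\delta\|_{L^1})$, which the Poincaré term cannot absorb as $\gamma_k\to0$. The bound does hold, but it uses the layer structure of $\delta$. For example, write the top layer as $\partial_2$ of a function bounded by $\abs{u_k(x_1)}$ and supported in a fixed strip, up to an $L^2$ error of size $\|u_k\|_{L^2}$, and take $H^{-1}$ norms.
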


Combining this result with \cref{thm:main} we get the next corollary, which extends \cite[Theorem~2.13]{Fio26}.

\begin{cor}

    Let $\phi : \R^2 \to [0, + \infty)$ be a uniformly elliptic surface tension and let $K \subset \R^2$ be the corresponding Wulff shape.
    Assume also that $\D(K) = \cb{\pm e_2}$ and $\alpha(K) < \frac{1}{2}$.
    Then, for $M \in (\alpha(K), 1 - \alpha(K))$ the horizontal lamella $L_M^{e_2}$ of area $M$ is the unique global minimizer (up to translations)
    of problem \eqref{eq:5.anOKprob} for $\gamma > 0$ sufficiently small.
    
\end{cor}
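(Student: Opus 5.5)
The plan is to check that, under the stated hypotheses, the horizontal lamella $L_M^{e_2}$ is the unique minimizer (up to translations) of the periodic Wulff problem \eqref{eq:1.wulffper}. Once that is done, \cref{prop:5.stillmin} applies directly, since $\phi$ is uniformly elliptic, and gives the conclusion for $\gamma>0$ sufficiently small.

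First, I use the uniqueness part of \cref{thm:main}. Because $\phi$ is uniformly elliptic, every minimizer of \eqref{eq:1.wulffper} is, up to translations and null sets, one of the following:
\begin{itemize}
\item the droplet $\sqrt{M/\abs{K}}\,K$, which occurs only if $M\le\alpha(K)\wedge\frac12$;
\item the bubble $\bigl(-\sqrt{(1-M)/\abs{K}}\,K\bigr)^c$, which occurs only if $M\ge 1-(\alpha(K)\wedge\frac12)$;
\item a lamella $L_M^z$ with $z\in\D(K)$.
\end{itemize}
Since $\alpha(K)<\frac12$, we have $\alpha(K)\wedge\frac12=\alpha(K)$, so for $M\in(\alpha(K),1-\alpha(K))$ the first two alternatives are excluded.

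The exclusion can also be confirmed through the strict energy comparison in \eqref{prf:main-per} and \eqref{prf:main-per2}. The condition $M>\alpha(K)=\frac{w(K)^2}{4\abs{K}}$ gives $2\sqrt{M\abs{K}}>w(K)$. Symmetrically, $1-M>\alpha(K)$ gives $2\sqrt{(1-M)\abs{K}}>w(K)$. So neither competitor can reach the value $w(K)$, which is the perimeter of the lamella and hence, by \eqref{eq:1.wulffvalues}, the minimum value.

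Consequently every minimizer is $L_M^z$ with $z\in\D(K)=\{\pm e_2\}$. It remains to see that $L_M^{-e_2}$ coincides with $L_M^{e_2}$ up to a translation. By \cref{defi:3.lamconf}, $L_M^{-e_2}$ is the horizontal strip $\T^1\times[-M,0]$, which is the translate of $L_M^{e_2}=\T^1\times[0,M]$ by $-Me_2$. Hence $L_M^{e_2}$ is the unique minimizer up to translations, and \cref{prop:5.stillmin} yields the claim.

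The only delicate point is the exact form of the uniqueness statement used. The minimizer must be unique up to translations, not merely up to the choice of a direction in $\D(K)$. This is why the hypothesis $\D(K)=\{\pm e_2\}$ is needed and why the identification of $L^{-e_2}_M$ with a translate of $L^{e_2}_M$ must be stated explicitly. Everything else is a direct combination of earlier results.
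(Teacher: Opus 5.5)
Your proposal is correct and follows the same route as the paper, which obtains the corollary by combining the uniqueness part of \cref{thm:main} with \cref{prop:5.stillmin}: for $\alpha(K)<\frac12$ and $M\in(\alpha(K),1-\alpha(K))$, the only minimizers are lamellae $L_M^z$ with $z\in\D(K)=\cb{\pm e_2}$. Your explicit check that $L_M^{-e_2}$ is a translate of $L_M^{e_2}$ is a detail the paper leaves implicit, and it is exactly what is needed to meet the "unique up to translations" hypothesis of \cref{prop:5.stillmin}.
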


Clearly, an analogous statement holds for $L_M^{e_1}$ when $\D(K) = \cb{\pm e_1}$. Furthermore, by inspecting the proof of \cref{prop:5.stillmin}, 
one can likewise verify that the criterion applies when $\D(K) = \cb{\pm e_1, \pm e_2}$, so that both $L_M^{e_1}$ and $L_M^{e_2}$ are global minimizers (and there are no others). 
Establishing the global minimality of slanted lamellae, however, would require repeating the second variation analysis in \cite{Fio26}, 
which was carried out only in the flat horizontal case, and this goes beyond the scope of this paper.

\bigskip
\subsection*{Acknowledgments}
The authors are members of the GNAMPA group of INdAM.

\printbibliography

\end{document}